\documentclass{amsart}
\usepackage{amsfonts,amsmath,amssymb}
\usepackage{geometry}
\newtheorem{theorem}{Theorem}
\newtheorem{proposition}{Proposition}
\newtheorem{lemma}{Lemma}
\newtheorem{corollary}{Corollary}
\newtheorem{definition}{Definition}

\begin{document}

\title{KPP type reaction--diffusion equations
       in narrow tubes with degenerating boundary and generalized second order operators}

\author{Hyejin Kim} 
\address{Department of Mathematics and Statistics, University of Michigan--Dearborn, Dearborn, MI 48128, USA}
\email{khyejin@umich.edu}
\keywords{Generalized second-order differential operator, Feynman–Kac formula, Weak convergence of Markov processes, KPP equation} 
\subjclass[2010]{Primary 60J60; Secondary 35K57}



\maketitle

\begin{abstract}
We consider a reaction--diffusion equation with
Kolmogorov--Petrovskii--Piskunov (KPP) nonlinearity and Neumann boundary
conditions in a narrow tube
$G^{\varepsilon,\delta}\subset\mathbb{R}\times\mathbb{R}^{n}$ of width of order
$\varepsilon$.  The tube is chosen from a family indexed by a second parameter
$\delta>0$, whose boundary $\partial G^{\delta}$ is smooth for each $\delta>0$
but degenerates as $\delta\downarrow0$.  In the iterated limit
$\varepsilon\downarrow0$ followed by $\delta\downarrow0$ the problem becomes a
Cauchy problem on the line whose spatial operator is a generalized second
order differential operator $D_{v}D_{u}$ in the sense of Feller, rather than a
classical second order elliptic operator.  Solutions of the limiting problem
need not be differentiable in $x$, so they must be understood in a generalized
sense.  Using the Feynman--Kac representation, we prove that the limiting
problem has a unique generalized solution.  When the cross sectional volume is
bounded above and below, we also prove that this solution is
H\"older continuous in $t$ with exponent $1/2$, locally uniformly in $x$,
whenever it is Lipschitz continuous in $x$.  Under the same volume bounds, this Lipschitz condition holds
when the initial function and the nonlinearity are Lipschitz continuous in $x$.  Finally we prove that the solutions in the narrow
tubes converge to the generalized solution uniformly on compact time intervals
in the same iterated limit if the tube, the initial function, and the
nonlinearity are periodic in $x$.
\end{abstract}

\section{Introduction}

Consider a Wiener process with normal reflection in a tube in
$\mathbb{R}\times\mathbb{R}^{n}$ that is unbounded along the axis $\mathbb{R}$ and has
cross sections of diameter of order $\varepsilon$.  Its two components evolve on
different time scales.  The transverse component is confined to a cross section of
diameter $O(\varepsilon)$ and meets the boundary repeatedly, so its distribution
relaxes to equilibrium on the cross section in time $O(\varepsilon^{2})$, whereas the
longitudinal component is unconstrained and covers distances of order one in time of
order one.  On a fixed time interval the transverse component therefore equilibrates
ever more often as $\varepsilon\downarrow0$.  It is the fast variable, fast in
relaxation rather than in displacement, its excursions being of size $O(\varepsilon)$
only.  In the limit the transverse component enters only through an average, and it
enters as a drift.  Reflection at a boundary that is nowhere parallel to the axis has
a component along the axis, directed towards the wider parts of the tube.
Integrating this effect over the cross section produces the drift
$\frac{1}{2}\frac{d\ln V}{dx}(x)$, where $V(x)$ is the volume of the cross section at
$x$.  The limiting process on the axis is the one dimensional diffusion
$dX_{t}=dW_{t}+\frac{1}{2}\frac{d\ln V}{dx}(X_{t})\,dt$.  This dimension reduction
goes back to Freidlin and Wentzell~\cite{FW93} (see also
\cite[Chapter~7]{F96} and \cite{FW12}), and it converts an initial--boundary
value problem in a thin multidimensional tube into a Cauchy problem on the line.
When the reaction term is nonlinear, the reduction carries over through the
Feynman--Kac formula.  The solution is an expectation of a functional of the
trajectories of the associated process, so convergence of solutions follows from weak
convergence of the processes together with a fixed point argument.  Freidlin and
Spiliopoulos \cite{FS} established this for narrow domains with nonlinear boundary
conditions and smooth boundary.

The situation changes when the limiting tube is not smooth.  Spiliopoulos \cite{SP09}
studied narrow tubes whose rescaled cross sectional volume converges, as the width
goes to zero, to a non-smooth function built from smooth pieces, steps, and Dirac
masses.  He showed that the longitudinal process converges to a Markov process with
gluing conditions at the discontinuities.  In his setting the narrowing of the tube
and the degeneration of its cross section occur in a single limit.  In the present
paper they are controlled by two separate limits.  Let
$G^{\delta}\subset\mathbb{R}\times\mathbb{R}^{n}$, $\delta>0$, be a family of domains
with smooth boundary, let $V^{\delta}(x)$ denote the volume of the cross section
$G^{\delta}_{x}$, and let
$G^{\varepsilon,\delta}=\{(x,y):x\in\mathbb{R},\ y\varepsilon^{-1}\in G^{\delta}_{x}\}$
be the corresponding narrow tube of width of order $\varepsilon$.  We consider the
second initial--boundary value problem for a KPP type reaction--diffusion equation in
$G^{\varepsilon,\delta}$ and let the two parameters go to zero in turn.  For each
fixed $\delta>0$ the boundary is smooth, and as $\varepsilon\downarrow0$ the solution
converges to that of a one dimensional reaction--diffusion equation with drift
$\frac{1}{2}\frac{d\ln V^{\delta}}{dx}$.  However, as $\delta\downarrow0$, the
boundary $\partial G^{\delta}$ degenerates and the drift term need not converge to a
function.  What converges instead is the pair consisting of the scale function and
the speed measure,
\[
  u(x)=\lim_{\delta\downarrow0}\int_{0}^{x}\frac{dy}{V^{\delta}(y)},
  \qquad
  v(x)=\lim_{\delta\downarrow0}\int_{0}^{x}2V^{\delta}(y)\,dy ,
\]
and the limiting process is generated by the generalized second order differential
operator $D_{v}D_{u}$ of Feller \cite{WF1,WF2}.  The limiting
Cauchy problem is 
\[
  \frac{\partial w}{\partial t}=D_{v}D_{u}w+f(x,0,w),
  \qquad w(0,x)=g(x),
\]
and its solution need not be twice differentiable in $x$, or even differentiable.
Therefore, a weaker notion of solution is needed.  We call $w$ a
generalized solution if it satisfies the integral equation that the
Feynman--Kac formula gives for classical solutions, with the expectation taken with
respect to the Markov process governed by $D_{v}D_{u}$.  In particular, every
classical solution is a generalized solution.

This paper proves the well-posedness of this limiting problem, the
regularity of its solution, and the convergence of the solutions in the narrow tubes
to it.  We prove three results.  First, the limiting Cauchy problem with
KPP nonlinearity has a unique generalized solution
(Theorem~\ref{thm:existence}).  
Second, when the
cross sectional volume is bounded above and below, this generalized solution is
H\"older continuous in $t$ with exponent $1/2$, locally uniformly in $x$, whenever it
is Lipschitz continuous in $x$ (Proposition~\ref{pro:holder}).  Under the same volume bounds, the Lipschitz
bound holds when the initial function and the nonlinearity are
Lipschitz continuous in $x$ (Lemma~\ref{lem:lipschitz} and Corollary~\ref{cor:holder}).  
Third, when the tube, the initial function, and the nonlinearity are periodic in $x$,
the solutions $w^{\varepsilon,\delta}$ in the narrow tubes converge to $w$ uniformly
on compact time intervals, first as $\varepsilon\downarrow0$ with $\delta>0$ fixed
and then as $\delta\downarrow0$ (Proposition~\ref{pro:convergence}).

The probabilistic ingredients we import are the weak convergence results for
reflected Wiener processes in narrow tubes.  For fixed $\delta>0$ and smooth
boundary these are due to Freidlin \cite{F96} and Freidlin and Spiliopoulos
\cite{FS}.  For the degenerate limit we use the theorem of Freidlin and Wentzell
\cite{FW2} on weak convergence of one dimensional Markov processes, which
characterizes the limit through convergence of the scale function and the speed
measure.  The one dimensional theory of generalized second order operators and the
processes they generate is Feller's \cite{WF1,WF2}, with the accounts in Dynkin
\cite{EBD3} and Mandl \cite{PM1}.

The proof of Theorem~\ref{thm:existence} follows the fixed point argument for
Feynman--Kac integral equations in \cite[Chapter~5]{F85}, and the same
notion of generalized solution for KPP type equations with a Feller operator
$D_{v}D_{u}$ appears in Spiliopoulos \cite[Section~4]{SP12}.  For strips in $\mathbb{R}^{2}$, Kim
\cite[Theorem~4]{Kim09} proved the iterated limit for the processes, together
with the convergence of solutions of the linear Cauchy problem.  The present paper
treats the nonlinear problem.  It proves convergence of solutions of the KPP equation
through the iterated limit, uniformly on $[0,T]\times\mathbb{R}$ in the periodic
setting (Proposition~\ref{pro:convergence}), where the nonlinearity couples the
Feynman--Kac functionals to the unknown solution.  It also proves regularity of the
generalized solution in $x$ and $t$, with constants controlled by the cross sectional
volume and uniform in $\delta$ (Proposition~\ref{pro:holder},
Lemma~\ref{lem:lipschitz}, and Corollary~\ref{cor:holder}).  Lastly, it allows
cross sections of any dimension $n\ge1$ in the first limit, and it formulates the
question of front speeds in such tubes as an open problem
(Section~\ref{sec:remarks}).

The original motivation for this analysis was the asymptotic speed of wave front
propagation.  For KPP nonlinearities in periodic media the front speed is determined
by the principal eigenvalues of a family of twisted operators~\cite{BHN05, FG79}, and one would like to
know that these eigenvalues, and hence the speed, pass to the limit as
$\varepsilon\downarrow0$ and $\delta\downarrow0$.  That question is genuinely harder
than the ones settled here, because the eigenvalue is determined by the behavior as
$t\to\infty$, while the
weak convergence of the underlying processes holds for each fixed $t$, and the two
limits must be interchanged.  Rather than address it here, we leave it as an open
problem in Section~\ref{sec:remarks}, together with what is known.  Front
propagation in narrow domains with smooth boundary is treated in \cite{FS}, and the
case of narrow random channels, by way of an approximation on random graphs and large
deviation analysis, by Freidlin and Hu \cite{FH13}.

The paper is organized as follows.  Section~\ref{sec:setup} introduces the geometry, states the main hypotheses, and reviews the Feynman--Kac representations for the
problems in $G^{\varepsilon,\delta}$ and in the reduced domain.
Section~\ref{sec:generalized} contains the existence and uniqueness theorem, the
regularity results, and the convergence proposition.
Section~\ref{sec:remarks} closes with further remarks and an open problem.

\section{Narrow domains and the limiting problem}\label{sec:setup}

Let $G^{\delta}$, for $\delta>0$, be a domain in $\mathbb{R}\times \mathbb{R}^n$, $n\ge1$, with $\mathbb{R}\times\{0\} \subset G^{\delta}$ such that, for each $x\in \mathbb{R}$, the set $G_x^{\delta}=\{y\in \mathbb{R}^n:(x,y)\in G^{\delta}\}$ is a bounded connected domain, and assume that $\sup_{\delta>0}\sup_{x}\operatorname{diam}G^{\delta}_{x}<\infty$. The boundary $\partial G^{\delta}$ of $G^{\delta}$ is of class $C^{3+\alpha}$ for some $\alpha\in(0,1)$, as in \cite{FS}, and the inward unit normal
$\gamma^{\delta}(x,y)$ at every point of $\partial G^{\delta}$ is not parallel to
$\mathbb{R}$. We denote by $V^{\delta}(x)$ the volume of $G_x^{\delta}$ in $\mathbb{R}^n$, and for fixed $\delta>0$ we assume $V^{\delta} \in
C^{1}(\mathbb{R})$ with $V^{\delta}>0$. The narrow tube is $G^{\varepsilon,\delta}=\{(x,y):x\in\mathbb{R},\ y\varepsilon^{-1}\in G^{\delta}_{x}\}$
with $0<\varepsilon\ll1$.

Consider the second initial--boundary value problem in a narrow tube $G^{\varepsilon,\delta}$
\begin{equation}\label{equ:EQ1}
  \left\{\begin{array}{ll}
    \dfrac{\partial w^{\varepsilon,\delta}(t,x,y)}{\partial t}
      =\dfrac{1}{2}\Delta w^{\varepsilon,\delta}+f(x,y,w^{\varepsilon,\delta}),
      & t>0,\ (x,y)\in G^{\varepsilon,\delta},\\[2ex]
    \left.\dfrac{\partial w^{\varepsilon,\delta}(t,x,y)}{\partial\gamma^{\varepsilon,\delta}}
      \right|_{(x,y)\in\partial G^{\varepsilon,\delta}}=0,\quad t>0,
      & w^{\varepsilon,\delta}(0,x,y)=g(x),
  \end{array}\right.
\end{equation}
where $\Delta$ is the Laplacian in $(x,y)$ and $\gamma^{\varepsilon,\delta}$ is the
inward unit normal to $\partial G^{\varepsilon,\delta}$.  We fix $T>0$ and work on $[0,T]$ in this paper.  
Throughout the paper we assume that $f:\mathbb{R}\times\mathbb{R}^{n}\times\mathbb{R}\to\mathbb{R}$ is
continuous and bounded, and is of Kolmogorov--Petrovskii--Piskunov (KPP) type
\cite{KPP}, that is,
\[
  f(x,y,0)=f(x,y,1)=0,\qquad
  f(x,y,w)>0 \ \text{ for } w\in(0,1),\qquad
  f(x,y,w)<0 \ \text{ for } w\notin[0,1].
\]

For $w>0$ we write $f(x,y,w)=c(x,y,w)\,w$.  The function $c$ is continuous in all
variables, bounded, and Lipschitz continuous in $w$ uniformly in $(x,y)$, with
Lipschitz constant $K_{c}$.  We set
\[
  \Lambda:=\sup_{x,y,\,0\le w\le 1}c(x,y,w)<\infty,
  \qquad
  c_{0}(x,y):=c(x,y,0),
\]
so that $c_{0}$ is the linearization of the nonlinearity at $w=0$.  We assume the
KPP condition $c_{0}(x,y)=\max_{0\le w\le1}c(x,y,w)$.  Since
$f(x,y,w)<0$ for $w>1$, we have $c(x,y,w)<0$ there, and hence $c\le\Lambda$ for every
$w\ge0$, not only on $[0,1]$.  Since $f>0$ on $(0,1)$, we also have $c>0$ there,
so $\Lambda>0$ and $e^{\Lambda T}>1$.  The KPP condition is
what makes the linearized problem govern the behavior of the nonlinear one.  The
function $c_{0}$ is the potential in the eigenvalue problems referred to in
Section~\ref{sec:remarks}. The function $g:\mathbb{R}\to[0,1]$ is continuous. By \cite{AF}, problem
\eqref{equ:EQ1} has a unique bounded classical solution.

Let $(X^{\varepsilon,\delta}_{t},Y^{\varepsilon,\delta}_{t})$ be the Wiener process
in $G^{\varepsilon,\delta}$ with instantaneous normal reflection on
$\partial G^{\varepsilon,\delta}$. Its trajectories are described by the stochastic differential equations
\begin{equation}\label{equ:EQ3}
  \begin{aligned}
    dX^{\varepsilon,\delta}_{t}&=dW^{1}_{t}
      +\gamma^{\varepsilon,\delta}_{1}
       (X^{\varepsilon,\delta}_{t},Y^{\varepsilon,\delta}_{t})\,dL^{\varepsilon,\delta}_{t},\\
    dY^{\varepsilon,\delta}_{t}&=dW^{2}_{t}
      +\gamma^{\varepsilon,\delta}_{2}
       (X^{\varepsilon,\delta}_{t},Y^{\varepsilon,\delta}_{t})\,dL^{\varepsilon,\delta}_{t},
  \end{aligned}
\end{equation}
where $W^{1}_t$ and $W^{2}_t$ are independent Wiener processes in $\mathbb{R}$ and
$\mathbb{R}^{n}$, respectively, and $\gamma^{\varepsilon,\delta}_{1}(x,y)$ and
$\gamma^{\varepsilon,\delta}_{2}(x,y)$ are the projections of the inward unit normal onto
$\mathbb{R}$ and $\mathbb{R}^{n}$. Moreover, $L^{\varepsilon,\delta}_{t}$ is the local
time of the process $(X_t^{\varepsilon, \delta}, Y_t^{\varepsilon, \delta})$ on $\partial G^{\varepsilon,\delta}$. The local time is a continuous nondecreasing process that increases only on $\{t:(X^{\varepsilon,\delta}_{t},Y^{\varepsilon,\delta}_{t})
\in\partial G^{\varepsilon,\delta}\}$.  By the Feynman--Kac formula~\cite{F85}, the solution of
\eqref{equ:EQ1} has the following representation
\begin{equation}\label{equ:EQ5}
  w^{\varepsilon,\delta}(t,x,y)
  =E^{\varepsilon,\delta}_{x,y}\left[g(X^{\varepsilon,\delta}_{t})
    \exp\left\{\int_{0}^{t}
      c\bigl(X^{\varepsilon,\delta}_{s},Y^{\varepsilon,\delta}_{s},
        w^{\varepsilon,\delta}(t-s,X^{\varepsilon,\delta}_{s},Y^{\varepsilon,\delta}_{s})
      \bigr)ds\right\}\right].
\end{equation}
We write $\|h\|=\sup|h|$ for the supremum
norm of a bounded function $h$.  Since $0\le g\le1$ and $c\le\Lambda$ for every
$w\ge0$, \eqref{equ:EQ5} gives at once the bound
\begin{equation}\label{equ:unifbound}
  0\ \le\ w^{\varepsilon,\delta}(t,x,y)\ \le\ \|g\|\,e^{\Lambda T},
  \qquad 0\le t\le T,
\end{equation}
uniformly in $\varepsilon$ and $\delta$.

Under these assumptions, it is shown in \cite{F96,FS} that, for fixed $\delta>0$, as $\varepsilon\downarrow0$, the longitudinal
component $X^{\varepsilon,\delta}_{t}$, $0\le t\le T$, converges weakly in
$C([0,T];\mathbb{R})$ to the one dimensional diffusion $X^{\delta}_{t}$ that satisfies the stochastic differential equation
\begin{equation}\label{equ:EQ4}
  dX^{\delta}_{t}=dW^{1}_{t}+b^{\delta}(X^{\delta}_{t})\,dt,
  \qquad
  b^{\delta}(x)=\frac{1}{2}\,\frac{d\ln V^{\delta}(x)}{dx}.
\end{equation}
Because $|Y^{\varepsilon,\delta}_{s}|\le\varepsilon\,
\sup_{x}\operatorname{diam}G^{\delta}_{x}$ for all $s$, the transverse component
converges to $0$ uniformly on $[0,T]$, so the pair
$(X^{\varepsilon,\delta},Y^{\varepsilon,\delta})$ converges weakly in
$C([0,T];\mathbb{R}\times\mathbb{R}^{n})$ to $(X^{\delta},0)$.

Correspondingly, $w^{\varepsilon,\delta}$ is expected to converge to the solution
$w^{\delta}$ of the reduced problem
\begin{equation}\label{equ:EQ2}
  \left\{\begin{array}{ll}
    \dfrac{\partial w^{\delta}(t,x)}{\partial t}
      =\dfrac{1}{2}w^{\delta}_{xx}
       +\dfrac{1}{2}\dfrac{d\ln V^{\delta}(x)}{dx}\,w^{\delta}_{x}
       +c\bigl(x,0,w^{\delta}\bigr)\,w^{\delta},
      & t>0,\ x\in\mathbb{R},\\[2ex]
    w^{\delta}(0,x)=g(x), & x\in\mathbb{R}.
  \end{array}\right.
\end{equation}
Using the Feynman--Kac formula~\cite{F85}, we can write the solution $w^{\delta}(t, x)$ of \eqref{equ:EQ2} as the expectation of a functional of the trajectories of the corresponding process
\begin{equation}\label{equ:EQ6}
  w^{\delta}(t,x)=E^{\delta}_{x}\left[g(X^{\delta}_{t})
    \exp\left\{\int_{0}^{t}c\bigl(X^{\delta}_{s},0,
      w^{\delta}(t-s,X^{\delta}_{s})\bigr)ds\right\}\right].
\end{equation}
For fixed $\delta>0$ the coefficients of \eqref{equ:EQ2} are smooth by the
assumptions on $G^{\delta}$ and the equation is uniformly parabolic, so by
Friedman \cite{AF} problem \eqref{equ:EQ2} has a unique bounded classical solution,
and that solution is continuous and bounded on $[0,T]\times\mathbb{R}$.  This regularity is used in
Section~\ref{sec:generalized}.

Next, as $\delta\downarrow0$, the boundary $\partial G^{\delta}$ degenerates and the drift
$b^{\delta}$ need not converge to a function.  For fixed $\delta>0$, the scale function and speed
measure associated with \eqref{equ:EQ2} are
$u^{\delta}(x)=\int_{0}^{x}1/V^{\delta}(y)\,dy$ and
$v^{\delta}(x)=\int_{0}^{x}2V^{\delta}(y)\,dy$, and the generator of \eqref{equ:EQ2}
satisfies $\mathcal{L}^{\delta}=D_{v^{\delta}}D_{u^{\delta}}$, the associated
generalized second order differential operator in the sense of Feller \cite{WF1}. We assume that the limits
\[
  u(x)=\lim_{\delta\downarrow0}\int_{0}^{x}\frac{dy}{V^{\delta}(y)},
  \qquad
  v(x)=\lim_{\delta\downarrow0}\int_{0}^{x}2V^{\delta}(y)\,dy
\]
exist, the first for every $x\in\mathbb{R}$ and the second for every continuity
point $x$ of $v$, so that $u^{\delta}\to u$ and
$v^{\delta}\to v$ as $\delta\downarrow0$.  Here $u$ is continuous and strictly
increasing, $v$ is right continuous and strictly increasing, and both are finite on
compact subsets of $\mathbb{R}$, so $u$ and $v$ are a scale function and a
speed measure of the limiting process.  These
assumptions are exactly the conditions of the weak convergence theorem of Freidlin
and Wentzell \cite{FW2} (see also \cite[Section~1]{Kim09}).  They hold in particular
if both limits exist locally uniformly, in which case $v$ is continuous.  Since the
functions $u^{\delta}$ are increasing and $u$ is continuous, $u^{\delta}\to u$ locally
uniformly.  Because the cross sections have
uniformly bounded diameter, $V^{\delta}\le C$ for a constant $C$ independent of $x$
and $\delta$, so that $u^{\delta}(x)-u^{\delta}(y)\ge(x-y)/C$ and hence
$u(x)-u(y)\ge(x-y)/C$ for $x>y$.  Therefore $u^{\delta}$ and $u$ tend to $\pm\infty$
at $\pm\infty$, the boundaries $\pm\infty$ are inaccessible for $X^{\delta}$ and for
$X$, and these processes do not explode \cite{PM1}.  The theorem of Freidlin and Wentzell
\cite{FW2} then identifies the limit of
$X^{\delta}_{t}$ as the Markov process $X_{t}$ generated by the Feller operator
$D_{v}D_{u}$.  For strips in $\mathbb{R}^{2}$, that is for $n=1$, the combined
statement that $X^{\varepsilon,\delta}$ converges weakly to $X$ as first
$\varepsilon\downarrow0$ and then $\delta\downarrow0$ is Theorem~4 of Kim
\cite{Kim09}.  The same scheme, in which a non-smooth channel is approximated
by smooth ones and the limit is identified through \cite{FW2}, is used by Freidlin
and Hu \cite{FH13b} for narrow random channels.  The limiting Cauchy
problem is
\begin{equation}\label{equ:EQ7}
  \left\{\begin{array}{ll}
    \dfrac{\partial w(t,x)}{\partial t}=D_{v}D_{u}w+f(x,0,w),
      & t>0,\ x\in\mathbb{R},\\[1ex]
    w(0,x)=g(x), & x\in\mathbb{R}.
  \end{array}\right.
\end{equation}
A solution of \eqref{equ:EQ7} need not be twice differentiable in $x$, or even differentiable, so the
classical notion of solution is not available, and we use the following.

\begin{definition}\label{def:gensol}
A bounded nonnegative Borel function $w:[0,T]\times\mathbb{R}\to\mathbb{R}$, continuous in $x$ for each
$t$, is a \emph{generalized solution} of \eqref{equ:EQ7} if $w(0,\cdot)=g$ and
\begin{equation}\label{equ:EQ9}
  w(t,x)=E_{x}\left[g(X_{t})\exp\left\{\int_{0}^{t}
    c\bigl(X_{s},0,w(t-s,X_{s})\bigr)ds\right\}\right],
  \qquad 0\le t\le T,\ x\in\mathbb{R},
\end{equation}
where $X_{t}$ is the Markov process generated by $D_{v}D_{u}$ in the sense of
Feller \cite{WF1}.  The generalized solution of \eqref{equ:EQ2} is defined in the same way, with
$D_{v^{\delta}}D_{u^{\delta}}$ in place of $D_{v}D_{u}$, as a bounded
nonnegative Borel function satisfying \eqref{equ:EQ6}, where $X^{\delta}$ is
given by \eqref{equ:EQ4}.
\end{definition}

If a classical solution of \eqref{equ:EQ7} exists, it is a generalized solution in the
sense of Definition~\ref{def:gensol}.  The Feynman--Kac formula applied to the
classical solution yields exactly \eqref{equ:EQ9}.  
Theorem~\ref{thm:existence} in Section~\ref{sec:generalized} shows that \eqref{equ:EQ9} has a unique bounded solution
whether or not a classical solution exists.

\section{Generalized solutions and convergence}
\label{sec:generalized}

We assume the hypotheses of Section~\ref{sec:setup} throughout the rest of the paper
and do not repeat them in the statements below.  Throughout, $X_{t}$ is the Markov
process governed by the Feller operator $D_{v}D_{u}$.

Let $\mathcal{C}_{T}$ be the set of bounded nonnegative Borel functions
$\varphi:[0,T]\times\mathbb{R}\to\mathbb{R}$ that are continuous in $x$ for each
$t\in[0,T]$, equipped with the norm
$\|\varphi\|_{T}:=\sup_{0\le t\le T}\sup_{x\in\mathbb{R}}|\varphi(t,x)|$.
For $\varphi\in\mathcal{C}_{T}$ define
\begin{equation}\label{equ:Fop}
  \mathcal{F}_{t}[\varphi](x)
  :=E_{x}\left[g(X_{t})\exp\left\{\int_{0}^{t}
     c\bigl(X_{s},0,\varphi(t-s,X_{s})\bigr)ds\right\}\right],
  \qquad 0\le t\le T,\ x\in\mathbb{R},
\end{equation}
so that, by Definition~\ref{def:gensol}, $w$ is a generalized solution of
\eqref{equ:EQ7} precisely when $w\in\mathcal{C}_{T}$ and
$w(t,\cdot)=\mathcal{F}_{t}[w]$ for every $t\in[0,T]$.

\begin{lemma}\label{lem:basic}
For all $\varphi,\psi\in\mathcal{C}_{T}$ and $0\le t\le T$,
\begin{equation}\label{equ:contraction}
  \sup_{x\in\mathbb{R}}
    \bigl|\mathcal{F}_{t}[\varphi](x)-\mathcal{F}_{t}[\psi](x)\bigr|
  \ \le\ K\int_{0}^{t}\ \sup_{z\in\mathbb{R}}
     \bigl|\varphi(r,z)-\psi(r,z)\bigr|\,dr,
\end{equation}
where $K:=\|g\|\,e^{\Lambda T}K_{c}$.  Moreover $\mathcal{F}_{t}$ sends
$\mathcal{C}_{T}$ into itself, with
$\sup_{x}|\mathcal{F}_{t}[\varphi](x)|\le\|g\|e^{\Lambda T}$ for every
$\varphi\in\mathcal{C}_{T}$.
\end{lemma}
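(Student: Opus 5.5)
The plan is to compare the two Feynman--Kac functionals pathwise through the elementary inequality $|e^{a}-e^{b}|\le e^{\max(a,b)}|a-b|$, take expectations, and then check separately that $\mathcal{F}_{t}[\varphi]$ is bounded, nonnegative, measurable and continuous in $x$.

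\textbf{The Lipschitz estimate.} Fix $t$ and $x$, and write
\[
A=\int_{0}^{t}c\bigl(X_{s},0,\varphi(t-s,X_{s})\bigr)ds,\qquad B=\int_{0}^{t}c\bigl(X_{s},0,\psi(t-s,X_{s})\bigr)ds .
\]
Since $\varphi,\psi\ge0$ and $c\le\Lambda$ for every $w\ge0$, both $A$ and $B$ are at most $\Lambda t\le\Lambda T$. The mean value theorem therefore gives $|e^{A}-e^{B}|\le e^{\Lambda T}|A-B|$. The Lipschitz property of $c$ in $w$ then yields
\[
|A-B|\le K_{c}\int_{0}^{t}\bigl|\varphi(t-s,X_{s})-\psi(t-s,X_{s})\bigr|\,ds\le K_{c}\int_{0}^{t}\sup_{z}\bigl|\varphi(r,z)-\psi(r,z)\bigr|\,dr,
\]
after the substitution $r=t-s$. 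Multiplying by $|g(X_{t})|\le\|g\|$ and taking $E_{x}$ gives \eqref{equ:contraction} with $K=\|g\|e^{\Lambda T}K_{c}$. This bound holds for every $x$, so it also holds for the supremum over $x$. The same pathwise bound $0\le g(X_{t})e^{A}\le\|g\|e^{\Lambda T}$ gives the stated sup bound and nonnegativity.

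\textbf{Measurability.} The composition $(s,\omega)\mapsto c(X_{s},0,\varphi(t-s,X_{s}))$ is jointly Borel, because $\varphi$ is Borel and $X$ has right-continuous paths, hence is progressively measurable. So $A$ is a random variable. Moreover $x\mapsto E_{x}[\cdot]$ of a bounded functional is Borel for a Markov family. Joint Borel measurability in $(t,x)$ follows similarly by treating $t$ as a parameter inside a jointly measurable integrand.

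\textbf{Continuity in $x$ (the main obstacle).} Here one needs a Feller-type property of $X$ for path functionals, not just for $E_{x}h(X_{t})$. I would use that $X$ generated by $D_{v}D_{u}$ on $\mathbb{R}$ with continuous strictly increasing scale $u$ is a regular one-dimensional diffusion with continuous paths that does not explode, as noted in Section~\ref{sec:setup}. The key point is that its law depends weakly continuously on the starting point in $C([0,T];\mathbb{R})$. This follows, for instance, from the time-change representation $X_{t}=u^{-1}(B_{\tau_{t}})$, where $B$ is a Brownian motion started at $u(x)$ and $\tau$ is the inverse of an additive functional built from the speed measure, or directly from \cite{PM1,EBD3}. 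Shifting the Brownian start from $u(x)$ to $u(x')$ and using the strong Markov property, the coupled paths coalesce after the first meeting time, which tends to $0$ in probability as $x'\to x$. Under this coupling, until they meet the two paths lie between the paths started at the extreme points, and after they meet they coincide. The functional $g(X_{t})e^{A}$ is bounded and continuous in the path for the uniform topology when $\varphi(t-s,\cdot)$ is continuous for each $s$, by dominated convergence in $s$. Hence $\mathcal{F}_{t}[\varphi](x')\to\mathcal{F}_{t}[\varphi](x)$. Equivalently, one may condition on $X$ at the meeting time. This coupling step is where I expect the real work to be.
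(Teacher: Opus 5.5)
Your proposal is correct and follows essentially the same route as the paper: the pathwise bound $|e^{a}-e^{b}|\le e^{\max\{a,b\}}|a-b|$ with both exponents at most $\Lambda T$ (which, as you note, uses $\varphi,\psi\ge0$ and $c\le\Lambda$ for $w\ge0$), combined with the Lipschitz property of $c$ in $w$, gives \eqref{equ:contraction}; continuity in $x$ then follows from continuity of the bounded path functional together with weak continuity of $x\mapsto P_{x}$ on path space. The paper obtains that last fact directly from the Feller property of the process generated by $D_{v}D_{u}$, citing Feller, Dynkin and Mandl, so your coalescing coupling is not needed; as sketched, its claim that the meeting time tends to $0$ in probability would itself require proof.
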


\begin{proof}
Since the exponents in \eqref{equ:Fop} are at most $\Lambda T$, the bound
$|e^{a}-e^{b}|\le e^{\max\{a,b\}}|a-b|$ and the Lipschitz property of $c$ in $w$
give
\[
  \bigl|\mathcal{F}_{t}[\varphi](x)-\mathcal{F}_{t}[\psi](x)\bigr|
  \ \le\ \|g\|\,e^{\Lambda T}\,E_{x}\!\int_{0}^{t}
     \bigl|c(X_{s},0,\varphi(t-s,X_{s}))-c(X_{s},0,\psi(t-s,X_{s}))\bigr|ds
\]
\[
  \ \le\ \|g\|\,e^{\Lambda T}K_{c}\,E_{x}\!\int_{0}^{t}
     \bigl|\varphi(t-s,X_{s})-\psi(t-s,X_{s})\bigr|ds
  \ \le\ K\int_{0}^{t}\sup_{z\in\mathbb{R}}
     \bigl|\varphi(t-s,z)-\psi(t-s,z)\bigr|ds .
\]
The bound holds for every $x\in\mathbb{R}$ and its right hand side does not depend on
$x$, so the supremum over $x$ may be taken on the left, which gives
\eqref{equ:contraction} after the change of variable $r=t-s$.

Next we check that $\mathcal{F}_{t}[\varphi]$ is continuous in $x$ when
$\varphi\in\mathcal{C}_{T}$.  Fix $t$ and consider the functional
\[
  \Theta(\xi):=g(\xi_{t})\exp\left\{\int_{0}^{t}
    c\bigl(\xi_{s},0,\varphi(t-s,\xi_{s})\bigr)ds\right\},
  \qquad \xi\in C([0,t];\mathbb{R}).
\]
It is bounded by $\|g\|e^{\Lambda T}$ and is continuous.  Suppose
$\xi^{(m)}\to\xi$ uniformly on $[0,t]$.  Then $g(\xi^{(m)}_{t})\to g(\xi_{t})$, and
for each fixed $s$,
\[
  c\bigl(\xi^{(m)}_{s},0,\varphi(t-s,\xi^{(m)}_{s})\bigr)
  \ \longrightarrow\
  c\bigl(\xi_{s},0,\varphi(t-s,\xi_{s})\bigr),
\]
because $g$, $\varphi(t-s,\cdot)$, and $c$ are continuous.  As $|c|$ is bounded, the
integrals converge by dominated convergence, so
$\Theta(\xi^{(m)})\to\Theta(\xi)$.  The process generated by $D_{v}D_{u}$ is a Feller
process \cite{EBD3,WF1,WF2,PM1}, so the law of the path started at $x$ depends
continuously on $x$.  Hence $x\mapsto E_{x}\Theta$ is continuous for every bounded
continuous $\Theta$ on $C([0,t];\mathbb{R})$.  Since
$\mathcal{F}_{t}[\varphi](x)=E_{x}\Theta$ for the functional $\Theta$ above,
$\mathcal{F}_{t}[\varphi]$ is continuous in $x$.

Finally, $|\mathcal{F}_{t}[\varphi]|\le\|g\|e^{\Lambda T}$ is immediate from
$c\le\Lambda$ and $\|g\|<\infty$.  In addition, $\mathcal{F}_{t}[\varphi]\ge0$ since
$g\ge0$, and $(t,x)\mapsto\mathcal{F}_{t}[\varphi](x)$ is Borel, because the
functional in \eqref{equ:Fop} is Borel in $t$ and the path, and $x\mapsto P_{x}$ is a
measurable kernel.  Together with the continuity just established,
this shows $\mathcal{F}_{t}[\varphi]\in\mathcal{C}_{T}$.
\end{proof}

\begin{theorem}\label{thm:existence}
Problem \eqref{equ:EQ7} has a unique
generalized solution in the sense of Definition~\ref{def:gensol}.
\end{theorem}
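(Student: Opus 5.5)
The plan is to obtain the generalized solution as the fixed point of the map $\varphi\mapsto(\mathcal{F}_{t}[\varphi])_{0\le t\le T}$ on $\mathcal{C}_{T}$, using Lemma~\ref{lem:basic} for everything except completeness. By Lemma~\ref{lem:basic}, this map sends $\mathcal{C}_{T}$ into itself. Its fixed points are exactly the generalized solutions, since $\mathcal{F}_{0}[\varphi]=g$ automatically gives $w(0,\cdot)=g$. Uniqueness comes from Gronwall's inequality. Existence comes from Picard iteration, whose convergence is controlled by the Volterra-type estimate \eqref{equ:contraction}.

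\emph{Uniqueness.} Let $w_{1},w_{2}$ be two generalized solutions and set $h(t):=\sup_{x}|w_{1}(t,x)-w_{2}(t,x)|$. Then $h$ is bounded by $2\|g\|e^{\Lambda T}$ by Lemma~\ref{lem:basic}, and \eqref{equ:contraction} gives $h(t)\le K\int_{0}^{t}h(r)\,dr$. Iterating this inequality $m$ times yields $h(t)\le \|h\|_{\infty}(Kt)^{m}/m!\to0$. This avoids any appeal to measurability of $h$ beyond what the iteration needs. If measurability of $h$ is a concern, I would replace $h$ by an upper integral, or note that $h$ is a supremum over rational $x$ by continuity in $x$, hence Borel. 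So $w_{1}=w_{2}$.

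\emph{Existence.} I would start from $w^{(0)}:=0\in\mathcal{C}_{T}$ and define $w^{(k+1)}(t,\cdot):=\mathcal{F}_{t}[w^{(k)}]$. Each $w^{(k)}$ lies in $\mathcal{C}_{T}$ by Lemma~\ref{lem:basic}. With $h_{k}(t):=\sup_{x}|w^{(k+1)}(t,x)-w^{(k)}(t,x)|$, estimate \eqref{equ:contraction} gives $h_{k}(t)\le K\int_{0}^{t}h_{k-1}(r)\,dr$. By induction, $h_{k}(t)\le M(Kt)^{k}/k!$ with $M=\|g\|e^{\Lambda T}$. Hence $\sum_{k}\|w^{(k+1)}-w^{(k)}\|_{T}<\infty$, and $w^{(k)}\to w$ uniformly on $[0,T]\times\mathbb{R}$. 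An alternative is a Bielecki weighted norm $\sup_{t}e^{-2Kt}\sup_{x}|\varphi(t,x)|$, under which the map is a strict contraction.

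\emph{Main obstacle: the limit lies in $\mathcal{C}_{T}$ and is a fixed point.} We must check that $\mathcal{C}_{T}$ is closed under uniform limits, which is the step I expect to need the most care. A uniform limit of functions that are Borel, bounded, nonnegative, and continuous in $x$ for each $t$ retains all four properties. Thus $w\in\mathcal{C}_{T}$ and $\|w\|_{T}\le M$.

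Finally, pass to the limit in $w^{(k+1)}(t,\cdot)=\mathcal{F}_{t}[w^{(k)}]$. Estimate \eqref{equ:contraction} gives $\sup_{x}|\mathcal{F}_{t}[w^{(k)}]-\mathcal{F}_{t}[w]|\le KT\|w^{(k)}-w\|_{T}\to0$. Hence $w(t,\cdot)=\mathcal{F}_{t}[w]$ for every $t$, so $w$ satisfies \eqref{equ:EQ9} and is a generalized solution.
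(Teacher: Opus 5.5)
Your proposal is correct and follows essentially the paper's proof. Both arguments use Picard iteration driven by the Volterra estimate \eqref{equ:contraction}, with factorial bounds, uniform convergence, closedness of $\mathcal{C}_{T}$ under uniform limits, and passage to the limit in $w^{(k+1)}(t,\cdot)=\mathcal{F}_{t}[w^{(k)}]$, followed by a Gr\"onwall-type uniqueness argument; the only differences are cosmetic: you start from $w^{(0)}=0$ instead of $g$, and you iterate $h(t)\le K\int_{0}^{t}h(r)\,dr$ directly (justifying measurability via suprema over rational $x$) rather than applying Gr\"onwall to the nondecreasing envelope $l(t)$.
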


\begin{proof}
First, we prove existence.  Define a sequence by
\begin{equation}\label{equ:EQ10}
  w_{0}(t,x):=g(x),
  \qquad
  w_{n+1}(t,\cdot):=\mathcal{F}_{t}[w_{n}]\quad (n\ge0).
\end{equation}
Since $g$ is continuous and bounded, $w_{0}\in\mathcal{C}_{T}$, and by
Lemma~\ref{lem:basic} each $w_{n}$ lies in $\mathcal{C}_{T}$ and satisfies
$\|w_{n}\|_{T}\le\|g\|e^{\Lambda T}$ for $n\ge1$.  Set
$d_{n}(t):=\sup_{x\in\mathbb{R}}|w_{n}(t,x)-w_{n-1}(t,x)|$ for $n\ge1$ and
$0\le t\le T$.  Applying \eqref{equ:contraction} to $\varphi=w_{n}$, $\psi=w_{n-1}$
gives
\begin{equation}\label{equ:recursion}
  d_{n+1}(t)\ \le\ K\int_{0}^{t}d_{n}(r)\,dr,
  \qquad n\ge1,\ 0\le t\le T .
\end{equation}

By the induction principle, we show that
\begin{equation}\label{equ:induction}
  d_{n}(t)\ \le\ C_{1}\,\frac{(Kt)^{n-1}}{(n-1)!},
  \qquad n\ge1,\ 0\le t\le T,
\end{equation}
where $C_{1}:=2\|g\|e^{\Lambda T}$.  For $n=1$, since $|w_{0}|\le\|g\|$ and
$|w_{1}|\le\|g\|e^{\Lambda T}$, we have
$d_{1}(t)\le\|g\|\bigl(1+e^{\Lambda T}\bigr)\le C_{1}$.  If
\eqref{equ:induction} holds for some $n\ge1$, then by \eqref{equ:recursion},
\[
  d_{n+1}(t)\ \le\ K\int_{0}^{t}C_{1}\frac{(Kr)^{n-1}}{(n-1)!}\,dr
  \ =\ C_{1}\frac{(Kt)^{n}}{n!},
\]
which is \eqref{equ:induction} for $n+1$.  Consequently
\[
  \sum_{k\ge1}\ \sup_{0\le t\le T}\sup_{x\in\mathbb{R}}
    \bigl|w_{k}(t,x)-w_{k-1}(t,x)\bigr|
  \ \le\ C_{1}\sum_{k\ge1}\frac{(KT)^{k-1}}{(k-1)!}
  \ =\ C_{1}e^{KT}\ <\ \infty ,
\]
so by the Weierstrass $M$-test the telescoping series
$w_{0}+\sum_{k\ge1}(w_{k}-w_{k-1})$ converges absolutely and uniformly on
$[0,T]\times\mathbb{R}$.  Its sum is $w:=\lim_{n\to\infty}w_{n}$.  As a
uniform limit of functions continuous in $x$, $w$ is continuous in $x$ for each $t$,
and $\|w\|_{T}\le\|g\|e^{\Lambda T}$, so $w\in\mathcal{C}_{T}$.  Also
$w(0,x)=\lim_{n}w_{n}(0,x)=g(x)$, since $w_{n}(0,\cdot)=g$ for every $n$ by
\eqref{equ:Fop}.

It remains to verify that $w$ is a fixed point of $\mathcal{F}$.  For fixed
$t\in[0,T]$ and every $n\ge0$,
\[
  \sup_{x}\bigl|w(t,x)-\mathcal{F}_{t}[w](x)\bigr|
  \ \le\ \sup_{x}\bigl|w(t,x)-w_{n+1}(t,x)\bigr|
        +\sup_{x}\bigl|\mathcal{F}_{t}[w_{n}](x)-\mathcal{F}_{t}[w](x)\bigr| ,
\]
using $w_{n+1}(t,\cdot)=\mathcal{F}_{t}[w_{n}]$.  The first term tends to $0$ as
$n\to\infty$ by uniform convergence.  By \eqref{equ:contraction} the second term is
at most $K\int_{0}^{t}\sup_{z}|w_{n}(r,z)-w(r,z)|\,dr\le KT\|w_{n}-w\|_{T}$, which
also tends to $0$.  Since the left hand side does not depend on $n$, letting $n\to\infty$ shows that
it is zero.  Hence $w(t,\cdot)=\mathcal{F}_{t}[w]$ for every $t\in[0,T]$, and $w$ is a
generalized solution of \eqref{equ:EQ7}.

\medskip
Next, we prove uniqueness.  Let $w$ and $\tilde w$ be generalized solutions in
$\mathcal{C}_{T}$ and set
\[
  l(t):=\sup_{0\le s\le t}\ \sup_{x\in\mathbb{R}}
    \bigl|w(s,x)-\tilde w(s,x)\bigr|,\qquad 0\le t\le T .
\]
Then $l$ is finite, nonnegative, and nondecreasing, and $l(0)=0$ because both
functions equal $g$ at $t=0$.  For $0\le s\le t$, Lemma~\ref{lem:basic} applied to
$w(s,\cdot)=\mathcal{F}_{s}[w]$ and $\tilde w(s,\cdot)=\mathcal{F}_{s}[\tilde w]$
gives
\[
  \sup_{x}\bigl|w(s,x)-\tilde w(s,x)\bigr|
  \ \le\ K\int_{0}^{s}\sup_{z}\bigl|w(r,z)-\tilde w(r,z)\bigr|dr
  \ \le\ K\int_{0}^{t}l(r)\,dr ,
\]
where the last step uses $\sup_{z}|w(r,z)-\tilde w(r,z)|\le l(r)$ and
$s\le t$.  The right hand side is independent of $s$, so taking the supremum over
$s\in[0,t]$,
\begin{equation}\label{equ:EQ14}
  l(t)\ \le\ K\int_{0}^{t}l(r)\,dr,\qquad 0\le t\le T .
\end{equation}
By Gr\"onwall's inequality, \eqref{equ:EQ14} gives $l(t)\le0\cdot e^{Kt}=0$ for all
$t\in[0,T]$, since $l$ is bounded and measurable.  Hence $w=\tilde w$ on
$[0,T]\times\mathbb{R}$.
\end{proof}

The bound $0\le w\le\|g\|e^{\Lambda T}$ used above is cruder than the bound
$0\le w\le1$ expected for KPP nonlinearities with $0\le g\le1$, but it is all the
fixed point argument requires.  The constants $K=\|g\|e^{\Lambda T}K_{c}$ and $C_{1}=2\|g\|e^{\Lambda T}$ depend on
$T$ but not on $\varepsilon$, $\delta$, $n$, or $x$.

\begin{proposition}\label{pro:holder}
Let $w$ be the generalized solution of \eqref{equ:EQ7} given by
Theorem~\ref{thm:existence}, and assume that $V^{\delta}$ is bounded above and below
on compact sets, uniformly in $\delta$.  Assume also that $w$ is Lipschitz
continuous in $x$, uniformly in $t$, that is, that there is $K_{w}<\infty$ with
\begin{equation}\label{equ:lipschyp}
  |w(t,x_{1})-w(t,x_{2})|\ \le\ K_{w}|x_{1}-x_{2}|
  \qquad\text{for all } x_{1},x_{2}\in\mathbb{R},\ 0<t\le T .
\end{equation}
Then $w$ is H\"older continuous in $t$ with exponent $1/2$, locally uniformly in $x$. 
For every compact $K\subset\mathbb{R}$ there is a constant $C=C(K,T)$ such that
\[
  |w(t+h,x)-w(t,x)|\ \le\ C\sqrt{h}
  \qquad\text{for all } x\in K,\ 0<t\le t+h\le T .
\]
\end{proposition}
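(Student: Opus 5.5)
The plan is to use the Markov property of $X$ to write $w(t+h,x)$ as a Feynman--Kac average of $w(t,\cdot)$ over the first time $h$, and then to control displacement by $E_x|X_h-x|\le C\sqrt h$. Write $\tau=t+h$ and split the time integral in \eqref{equ:EQ9} at $s=h$:
\[
w(t+h,x)=E_x\left[\exp\left\{\int_0^h c\bigl(X_s,0,w(t+h-s,X_s)\bigr)ds\right\}\,\Phi(X_h)\right].
\]
Here $\Phi(z)=E_z\bigl[g(X_t)\exp\{\int_0^t c(X_r,0,w(t-r,X_r))dr\}\bigr]=w(t,z)$, again by \eqref{equ:EQ9}. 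The middle argument $w(t+h-s,\cdot)$ becomes $w(t-r,\cdot)$ after the shift $s=h+r$, and the Markov property of the Feller process $X$ justifies the conditioning at time $h$. This gives the semigroup-type identity
\[
w(t+h,x)=E_x\left[e^{\int_0^h c(X_s,0,w(t+h-s,X_s))ds}\,w(t,X_h)\right].
\]

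The main estimate then splits into two terms:
\[
|w(t+h,x)-w(t,x)|\le E_x\bigl|e^{\int_0^h c\,ds}-1\bigr|\,\|w\|_T+E_x|w(t,X_h)-w(t,x)|.
\]
Since $|c|\le\|c\|$ is bounded and $\|w\|_T\le\|g\|e^{\Lambda T}$ by Lemma~\ref{lem:basic}, the first term is $O(h)\le O(\sqrt h)$ for $h\le T$. By \eqref{equ:lipschyp} the second term is at most $K_w E_x|X_h-x|$. It therefore remains to show
\[
E_x|X_h-x|\le C(K,T)\sqrt h\quad\text{for } x\in K,\ h\le T.
\]

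This displacement bound is the main obstacle, because $X$ is not given by an SDE. My first attempt would pass through the approximations $X^\delta$: since $X^\delta\Rightarrow X$ by \cite{FW2} and $z\mapsto |z-x|\wedge M$ is bounded and continuous, it suffices to bound $E_x|X^\delta_h-x|$ uniformly in $\delta$, then let $M\to\infty$ by monotone convergence. For $X^\delta$ the drift $b^\delta=\tfrac12(\ln V^\delta)'$ is not uniformly bounded, so I would work in natural scale. The process $Y^\delta=u^\delta(X^\delta)$ is a martingale with $d\langle Y^\delta\rangle=(V^\delta(X^\delta))^{-2}dt$. Localize with the exit time $\sigma$ of $X^\delta$ from a compact neighborhood $K'\supset K$, where $c_1\le V^\delta\le c_2$ uniformly in $\delta$ by hypothesis.

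With that localization, $u^\delta$ is bi-Lipschitz on $K'$ with constants independent of $\delta$, and $E|Y^\delta_{h\wedge\sigma}-Y^\delta_0|^2\le c_1^{-2}h$. This yields $E_x|X^\delta_{h\wedge\sigma}-x|\le C\sqrt h$. For the part where $\sigma\le h$, the same Doob/BDG bound in natural scale gives $P_x(\sigma\le h)\le Ch/\mathrm{dist}(K,\partial K')^2$. On that event $|X^\delta_h-x|$ can be large, so I would use the truncation by $M$: the contribution is at most $M\cdot Ch$, which suffices for a fixed $M$ chosen depending on $K$. Alternatively, since $w$ is bounded, I would replace the Lipschitz estimate by $|w(t,X_h)-w(t,x)|\le\min(K_w|X_h-x|,2\|w\|_T)$, which makes the truncation automatic.

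Combining the two terms gives $|w(t+h,x)-w(t,x)|\le C\sqrt h$ uniformly for $x\in K$.
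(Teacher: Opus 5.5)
Your proof is correct. The reduction step matches the paper's: the Markov property at time $h$, splitting off the $O(h)$ exponential term, and reducing to a displacement estimate for $X$. The route differs in how you control that displacement.

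\textbf{The paper's route.} The paper works intrinsically with the limit process.
\begin{itemize}
\item It builds $\Psi(x)=\int_{x_0}^{x}\int_{x_0}^{y}dv\,du$, so that $D_vD_u\Psi\equiv1$.
\item It shows $\Psi(x)\ge\frac{b^2}{2}(x-x_0)^2$ on $K_R$, using lower bounds $du,dv\ge b\,dx$ obtained by passing the bounds on $V^\delta$ to the limit via right continuity of $v$.
\item It applies Dynkin's formula stopped at the exit time from $K_R$, which gives $E_{x_0}(X_{h\wedge\tau}-x_0)^2\le 2h/b^2$ and $P_{x_0}(\tau\le h)\le 2h/(b^2R^2)$.
\end{itemize}

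\textbf{Your route.} You prove the same second-moment and exit estimates for the smooth diffusions $X^\delta$ by ordinary It\^o calculus in natural scale. Here $u^\delta(X^\delta)$ is a martingale with $d\langle u^\delta(X^\delta)\rangle=V^\delta(X^\delta)^{-2}dt$, and the estimates hold uniformly in $\delta$. You then transfer them to $X$ through the weak convergence $X^\delta\Rightarrow X$ recalled in Section~\ref{sec:setup}.

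\textbf{What each buys.} Your route avoids a technical point the paper must handle: the unbounded $\Psi$ has to be replaced by a bounded element of $\mathcal{D}(D_vD_u)$ agreeing with it near the interval. It also gives the displacement bound uniformly in $\delta$ for free, which is what the corollary needs for $w^\delta$. The paper's route is intrinsic to $(u,v)$. It therefore covers any process generated by $D_vD_u$ with those local lower bounds, not only limits of this approximating family, and it does not use the weak convergence theorem.

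\textbf{One fix to your write-up.} The intermediate target $E_x|X_h-x|\le C\sqrt h$ is not what your argument delivers. The uniform-in-$\delta$ bound you actually get is $E^\delta_x\bigl[|X^\delta_h-x|\wedge M\bigr]\le C\sqrt h+CMh$. This degenerates as $M\to\infty$, so letting $M\to\infty$ by monotone convergence yields nothing.

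What works is your ``alternative'', and you should lead with it:
\begin{itemize}
\item Use $|w(t,X_h)-w(t,x)|\le K_w\bigl(|X_h-x|\wedge M\bigr)$ with $M=2\|w\|_T/K_w$.
\item Bound $E^\delta_x\bigl[|X^\delta_h-x|\wedge M\bigr]$ by splitting on $\{\sigma>h\}$, handled by Cauchy--Schwarz, and $\{\sigma\le h\}$, handled by the exit estimate.
\item Pass to the limit $\delta\downarrow0$ using the bounded continuous path functional $\xi\mapsto|\xi_h-x|\wedge M$.
\end{itemize}
This is the analogue of the paper's split into $\{\tau>h\}$ and $\{\tau\le h\}$, but in a form compatible with weak convergence. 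The indicator of $\{\tau\le h\}$ is not a continuous path functional; the paper does not need that because it never leaves the limit process.

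Also take $K'$ to be an interval, for instance the closed $1$-neighbourhood of the convex hull of $K$. Then the bi-Lipschitz bound for $u^\delta$ applies between $x$ and $X^\delta_{h\wedge\sigma}$, and the exit estimate holds with $\mathrm{dist}(K,\partial K')=1$.
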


\begin{proof}
Fix a compact $K\subset\mathbb{R}$.  By hypothesis there are constants
$0<m\le M<\infty$, depending on $K$ but not on $\delta$, with $m\le V^{\delta}\le M$
on $K$.  Since $du^{\delta}=dx/V^{\delta}$ and $dv^{\delta}=2V^{\delta}dx$, this
gives $du^{\delta}\ge M^{-1}dx$ and $dv^{\delta}\ge 2m\,dx$ on $K$.  Letting
$\delta\downarrow0$ and using the right continuity of $v$, we obtain
\[
  du(x)\ \ge\ a_{1}(K)\,dx
  \qquad\text{and}\qquad
  dv(x)\ \ge\ a_{2}(K)\,dx
  \qquad\text{on } K ,
\]
in the sense of measures, with $a_{1}(K)=M^{-1}$ and $a_{2}(K)=2m$.  Write
$b(K):=\min\{a_{1}(K),a_{2}(K)\}$.

Note that $\|w\|=\sup_{t,x}|w(t,x)|\le\|g\|e^{\Lambda T}$ by
Theorem~\ref{thm:existence}, and that $\|c\|=\sup|c|<\infty$ since $c$ is bounded.

First we reduce the assertion to a second moment estimate.
Fix $x\in\mathbb{R}$ and $h>0$ with $t+h\le T$.  By \eqref{equ:EQ9} and the Markov
property of $X_{t}$,
\begin{equation}\label{equ:EQ15}
  w(t+h,x)=E_{x}\left[w(t,X_{h})\exp\left\{\int_{0}^{h}
    c\bigl(X_{s},0,w(t+h-s,X_{s})\bigr)ds\right\}\right].
\end{equation}
Subtracting $w(t,x)$ and inserting $w(t,x)\cdot1$ inside the expectation,
\begin{equation}\label{equ:EQ16}
  \begin{aligned}
  |w(t+h,x)-w(t,x)|
  &\le E_{x}\Bigl[|w(t,X_{h})-w(t,x)|
       \exp\Bigl\{\int_{0}^{h}c\,ds\Bigr\}\Bigr]\\
  &\qquad+\|w\|\,E_{x}\Bigl|\exp\Bigl\{\int_{0}^{h}c\,ds\Bigr\}-1\Bigr|\\
  &\le e^{\|c\|T}\,E_{x}\bigl|w(t,X_{h})-w(t,x)\bigr|
      +\|w\|\,e^{\|c\|T}\|c\|\,h ,
  \end{aligned}
\end{equation}
because $|e^{a}-1|\le e^{|a|}|a|$ with $|a|\le\|c\|h\le\|c\|T$.
Since $h\le\sqrt{T}\sqrt{h}$ for $0<h\le T$, the second term is at most
$\|w\|e^{\|c\|T}\|c\|\sqrt{T}\sqrt{h}$.  It therefore suffices to bound
$E_{x}|w(t,X_{h})-w(t,x)|$ by a constant multiple of $\sqrt{h}$, and by
\eqref{equ:lipschyp} it suffices to control $E_{x}|X_{h}-x|$.

Next we construct an auxiliary function to which Dynkin's formula applies.
Write $x_{0}:=x$ for the point fixed above, and set
\begin{equation}\label{equ:EQ17}
  \Psi(x):=\int_{x_{0}}^{x}\int_{x_{0}}^{y}dv(z)\,du(y),
  \qquad x\in\mathbb{R}.
\end{equation}
Since $u$ and $v$ are finite on compact subsets of $\mathbb{R}$, the inner integral
is finite for every $y$ and $\Psi$ is finite and continuous on $\mathbb{R}$, with
$\Psi(x_{0})=0$.

The function $\Psi$ was chosen so that $D_{v}D_{u}\Psi\equiv1$.
Differentiating \eqref{equ:EQ17} with respect to $u$ removes the outer integral,
\[
  D_{u}\Psi(x)=\int_{x_{0}}^{x}dv(z)=v(x)-v(x_{0}),
\]
and differentiating this with respect to $v$ removes the remaining one, so
$D_{v}D_{u}\Psi\equiv1$.  In particular, $D_{v}D_{u}\Psi$ is defined everywhere.

The function $\Psi$ is the first member of the family obtained by iterating this
construction,
\[
  \Psi_{0}\equiv1,
  \qquad
  \Psi_{n}(x)=\int_{x_{0}}^{x}\int_{x_{0}}^{y}\Psi_{n-1}(z)\,dv(z)\,du(y)
  \quad(n\ge1),
\]
which satisfies $D_{v}D_{u}\Psi_{n}=\Psi_{n-1}$ for every $n$ and, by the same
argument as below, controls the higher moments of the increments of $X$.  Only
$\Psi=\Psi_{1}$ is needed here.

Moreover $\Psi\ge0$.  For $x>x_{0}$ both integrations in \eqref{equ:EQ17} run
forwards, so the integrand $v(y)-v(x_{0})$ is nonnegative and so is the integral.
For $x<x_{0}$ both run backwards, and reversing each of them changes the sign twice,
so again $\Psi(x)\ge0$.

For any bounded $f\in\mathcal{D}(D_{v}D_{u})$ the process
$f(X_{t})-\int_{0}^{t}D_{v}D_{u}f(X_{s})\,ds$ is a martingale
\cite{EBD3,PM1}.  Taking $f=\Psi$ formally and using $D_{v}D_{u}\Psi\equiv1$, the integral
reduces to $\int_{0}^{t}ds=t$, so
\begin{equation}\label{equ:EQ18}
  \Phi(t):=\Psi(X_{t})-t
\end{equation}
is a local martingale under $P_{x_{0}}$, with $\Phi(0)=\Psi(x_{0})=0$.
Since $\Psi$ is unbounded, it need not lie in the domain of the Feller
generator itself.  What we use is that $\Phi(t\wedge\sigma)$ is a martingale when
$\sigma$ is the exit time of $X$ from a bounded interval.  To see this, take a
bounded function $f\in\mathcal{D}(D_{v}D_{u})$ that coincides with $\Psi$ on a
neighborhood of the closed interval.  Up to time $\sigma$ the process stays in the
closed interval, where $f=\Psi$ and $D_{v}D_{u}f=1$, so stopping the martingale
$f(X_{t})-\int_{0}^{t}D_{v}D_{u}f(X_{s})\,ds$ at $\sigma$ gives exactly
$\Phi(t\wedge\sigma)$.

Let $R>0$, put $K_{R}:=[x_{0}-R,\,x_{0}+R]$, and let
\[
  \tau:=\inf\{s\ge0:\ |X_{s}-x_{0}|\ge R\} .
\]
By the preceding paragraph, $\Phi(t\wedge\tau)$ is a martingale, so
$E_{x_{0}}\Phi(h\wedge\tau)=0$, that is,
\begin{equation}\label{equ:optional}
  E_{x_{0}}\bigl[\Psi(X_{h\wedge\tau})\bigr]
  =E_{x_{0}}\bigl[h\wedge\tau\bigr]\ \le\ h .
\end{equation}

We now bound $\Psi$ from below.
Let $b:=b(K_{R})=\min\{a_{1}(K_{R}),a_{2}(K_{R})\}>0$.  For
$x\in K_{R}$ with $x>x_{0}$,
\[
  \Psi(x)=\int_{x_{0}}^{x}\!\!\int_{x_{0}}^{y}dv(z)\,du(y)
  \ \ge\ \int_{x_{0}}^{x}\!\!\int_{x_{0}}^{y}b^{2}\,dz\,dy
  \ =\ \frac{b^{2}}{2}(x-x_{0})^{2},
\]
using $dv\ge b\,dz$ and $du\ge b\,dy$ on $K_{R}$.  For $x\in K_{R}$ with $x<x_{0}$,
reversing both orientations,
\[
  \Psi(x)=\int_{x}^{x_{0}}\!\!\left(\int_{y}^{x_{0}}dv(z)\right)du(y)
  \ \ge\ \int_{x}^{x_{0}}b(x_{0}-y)\,b\,dy
  \ =\ \frac{b^{2}}{2}(x_{0}-x)^{2},
\]
and the bound is trivial at $x=x_{0}$.  Hence
\begin{equation}\label{equ:EQ20}
  \Psi(x)\ \ge\ \frac{b^{2}}{2}\,(x-x_{0})^{2}
  \qquad\text{for all }x\in K_{R}.
\end{equation}

This gives a second moment estimate for the stopped process.
Since $X_{h\wedge\tau}\in K_{R}$ almost surely, \eqref{equ:EQ20} and
\eqref{equ:optional} give
\begin{equation}\label{equ:secondmoment}
  E_{x_{0}}\bigl[(X_{h\wedge\tau}-x_{0})^{2}\bigr]\ \le\ \frac{2h}{b^{2}} .
\end{equation}
On $\{\tau\le h\}$ one has $|X_{\tau}-x_{0}|=R$, so \eqref{equ:secondmoment} also
yields the exit estimate
\begin{equation}\label{equ:exit}
  P_{x_{0}}(\tau\le h)\ \le\ \frac{2h}{b^{2}R^{2}} .
\end{equation}

Next, splitting the expectation appearing in \eqref{equ:EQ16} according to whether
the process has exited $K_{R}$ by time $h$ gives
\[
  E_{x_{0}}\bigl|w(t,X_{h})-w(t,x_{0})\bigr|
  \ \le\ K_{w}\,E_{x_{0}}\bigl[|X_{h}-x_{0}|\,\mathbf{1}_{\{\tau>h\}}\bigr]
       +2\|w\|\,P_{x_{0}}(\tau\le h).
\]
On $\{\tau>h\}$ we have $X_{h}=X_{h\wedge\tau}$, so by the Cauchy--Schwarz
inequality and \eqref{equ:secondmoment},
\[
  E_{x_{0}}\bigl[|X_{h}-x_{0}|\,\mathbf{1}_{\{\tau>h\}}\bigr]
  \ \le\ \Bigl(E_{x_{0}}\bigl[(X_{h\wedge\tau}-x_{0})^{2}\bigr]\Bigr)^{1/2}
  \ \le\ \frac{\sqrt{2}}{b}\sqrt{h},
\]
while by \eqref{equ:exit} and $h\le\sqrt{T}\sqrt{h}$,
\[
  2\|w\|\,P_{x_{0}}(\tau\le h)
  \ \le\ \frac{4\|w\|}{b^{2}R^{2}}\,h
  \ \le\ \frac{4\|w\|\sqrt{T}}{b^{2}R^{2}}\sqrt{h}.
\]
Substituting into \eqref{equ:EQ16},
\begin{equation}\label{equ:EQ21}
  |w(t+h,x_{0})-w(t,x_{0})|\ \le\ C\sqrt{h},
\end{equation}
with
\[
  C\ =\ e^{\|c\|T}\left(\frac{\sqrt{2}\,K_{w}}{b}
        +\frac{4\|w\|\sqrt{T}}{b^{2}R^{2}}\right)
      +\|w\|\,e^{\|c\|T}\|c\|\sqrt{T}.
\]
Finally, given a compact $K\subset\mathbb{R}$, take $R=1$ and apply the above at each
$x_{0}\in K$ with $b=b(K_{1})$, where $K_{1}:=\{x:\operatorname{dist}(x,K)\le1\}$ is
again compact.  The constant $C$ then depends only on $K$ and $T$, and
\eqref{equ:EQ21} holds for every $x_{0}\in K$.  This is the assertion.
\end{proof}

Lemma~\ref{lem:lipschitz} below shows that the Lipschitz bound \eqref{equ:lipschyp}
holds when the cross sectional volume is bounded above and below on all of
$\mathbb{R}$, uniformly in $\delta$, and $g$ and $c$ are Lipschitz in $x$.  Then the
conclusion of Proposition~\ref{pro:holder} holds without assuming
\eqref{equ:lipschyp} (Corollary~\ref{cor:holder}).  The bounds on $V^{\delta}$ in
Proposition~\ref{pro:holder} allow the limiting volume to have jumps, as for a tube
whose cross section changes abruptly.  In that case $b^{\delta}$ does not converge to
a function, and the limiting operator carries a gluing condition of skew type at each
jump, as in \cite{SP09} and \cite[Section~4, Example~(1)]{Kim09}.

\begin{lemma}\label{lem:lipschitz}
Assume that there are constants $0<m_{0}\le M_{0}<\infty$ with
$m_{0}\le V^{\delta}(x)\le M_{0}$ for all $x\in\mathbb{R}$ and $\delta>0$, that $g$
is Lipschitz continuous with constant $K_{g}$, and that $c(x,0,w)$ is Lipschitz
continuous in $x$ with constant $K_{0}$, uniformly in
$w\in[0,\|g\|e^{\Lambda T}]$.  Put $\kappa:=M_{0}/m_{0}$ and
\[
  A_{0}:=\kappa\,e^{\Lambda T}\bigl(K_{g}+\|g\|K_{0}T\bigr),
  \qquad
  B_{0}:=\kappa\,\|g\|e^{\Lambda T}K_{c}.
\]
Then the generalized solution $w$ of \eqref{equ:EQ7} satisfies
\eqref{equ:lipschyp} with $K_{w}=A_{0}e^{B_{0}T}$.  The same bound holds for the
solution $w^{\delta}$ of \eqref{equ:EQ2}, for every $\delta>0$.
\end{lemma}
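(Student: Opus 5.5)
The plan is to prove the Lipschitz bound by coupling two copies of the process and measuring their distance in natural scale. The factor $\kappa=M_{0}/m_{0}$ in $A_{0}$ and $B_{0}$ should come exactly from this scale comparison. The argument is the same for $X$ (governed by $D_{v}D_{u}$) and for $X^{\delta}$ (governed by $D_{v^{\delta}}D_{u^{\delta}}$), so I describe it for $X$.

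\emph{Step 1 (coupling estimate).} Fix $x_{1}<x_{2}$. Let $X^{1},X^{2}$ be independent copies started at $x_{1},x_{2}$, run until the meeting time $\rho:=\inf\{s:X^{1}_{s}=X^{2}_{s}\}$, and glued together afterwards. By the strong Markov property of the Feller process, each marginal is again the process generated by $D_{v}D_{u}$. Since paths are continuous in one dimension, $X^{1}_{s}\le X^{2}_{s}$ for all $s$.

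The process $u(X^{i}_{s})$ is a local martingale, since it is a time changed Brownian motion and the boundaries $\pm\infty$ are inaccessible. Hence $u(X^{2}_{s})-u(X^{1}_{s})$ is a nonnegative local martingale, and therefore a supermartingale, so
\[
E\bigl[u(X^{2}_{s})-u(X^{1}_{s})\bigr]\le u(x_{2})-u(x_{1})\le \frac{x_{2}-x_{1}}{m_{0}} .
\]
The last inequality holds because $du^{\delta}=dx/V^{\delta}\le dx/m_{0}$ for every $\delta$, and this passes to the limit $u$.

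In the other direction, $u(y)-u(z)\ge (y-z)/M_{0}$ for $y>z$, as already noted in Section~\ref{sec:setup}. Combining the two bounds gives
\[
E|X^{2}_{s}-X^{1}_{s}|\le \kappa |x_{2}-x_{1}|\qquad\text{for all } s\in[0,T].
\]
This is the only place the volume bounds enter.

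\emph{Step 2 (Lipschitz bound for the Picard iterates).} I would prove the bound for the iterates $w_{n}$ of \eqref{equ:EQ10}, uniformly in $n$. This avoids assuming a priori that $w$ is Lipschitz. Let $L_{n}(r)$ be the Lipschitz constant of $w_{n}(r,\cdot)$; clearly $L_{0}=K_{g}$.

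Write $w_{n+1}(t,x_{i})$ as an expectation over the coupled pair. Then use $|e^{a}-e^{b}|\le e^{\Lambda T}|a-b|$ together with the Lipschitz properties of $g$, of $c(\cdot,0,w)$ in $x$, and of $c$ in $w$. Since all iterates take values in $[0,\|g\|e^{\Lambda T}]$, this gives
\[
|w_{n+1}(t,x_{1})-w_{n+1}(t,x_{2})|\le e^{\Lambda T}\Bigl(K_{g}E|X^{1}_{t}-X^{2}_{t}|+\|g\|\int_{0}^{t}\bigl(K_{0}+K_{c}L_{n}(t-s)\bigr)E|X^{1}_{s}-X^{2}_{s}|\,ds\Bigr).
\]
Applying Step 1 then yields
\[
L_{n+1}(t)\le A_{0}+B_{0}\int_{0}^{t}L_{n}(r)\,dr .
\]
By induction, each $L_{n}$ is finite and $L_{n}(t)\le A_{0}\sum_{k}(B_{0}t)^{k}/k!\le A_{0}e^{B_{0}t}$; note $A_{0}\ge K_{g}$ because $\kappa\ge 1$ and $e^{\Lambda T}>1$. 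Since $w_{n}\to w$ uniformly by Theorem~\ref{thm:existence}, the Lipschitz bound passes to the limit, giving $K_{w}=A_{0}e^{B_{0}T}$.

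For $w^{\delta}$ the same iteration converges to the unique bounded solution of \eqref{equ:EQ6}, which by uniqueness coincides with the classical solution. The same constants apply because $m_{0}\le V^{\delta}\le M_{0}$ holds for each $\delta$.

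\emph{Main obstacle.} The delicate step is Step 1. One must justify the strong Markov gluing at $\rho$ for the Feller process, and the supermartingale property of $u(X^{2})-u(X^{1})$ when $u(X)$ is only a local martingale. For the latter, I would rely on nonnegativity plus Fatou's lemma along localizing exit times of bounded intervals, as in the proof of Proposition~\ref{pro:holder}. For the former, I would note that the ordering $X^{1}\le X^{2}$ holds before $\rho$ by continuity of paths, and holds trivially after $\rho$ by construction.
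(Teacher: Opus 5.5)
Your proposal is correct and follows the paper's proof. It uses the same ordered coupling of independent copies glued at the meeting time, the same scale comparison giving $E|X^{2}_{s}-X^{1}_{s}|\le\kappa(x_{2}-x_{1})$, and the same induction on the Picard iterates passed to the uniform limit, with the identical extension to $w^{\delta}$.

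The only difference is minor. The paper first proves $E_{x}u(X_{t})=u(x)$ exactly, using the second moment bound and uniform integrability. You instead apply Fatou to the nonnegative difference $u(X^{2}_{s})-u(X^{1}_{s})$, which is the difference of two independent local martingales stopped at $\rho$. That gives only an inequality, but the inequality is all you need, and it spares the integrability step.
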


\begin{proof}
First, we compare the scale function and the speed measure with Lebesgue measure.
For $x_{1}<x_{2}$ we have
$(x_{2}-x_{1})/M_{0}\le u^{\delta}(x_{2})-u^{\delta}(x_{1})\le(x_{2}-x_{1})/m_{0}$ and
$2m_{0}(x_{2}-x_{1})\le v^{\delta}(x_{2})-v^{\delta}(x_{1})\le2M_{0}(x_{2}-x_{1})$.  Letting
$\delta\downarrow0$ and using the right continuity of $v$, we obtain
\begin{equation}\label{equ:bilip}
  \frac{x_{2}-x_{1}}{M_{0}}\ \le\ u(x_{2})-u(x_{1})\ \le\ \frac{x_{2}-x_{1}}{m_{0}},
  \qquad
  2m_{0}(x_{2}-x_{1})\ \le\ v(x_{2})-v(x_{1})\ \le\ 2M_{0}(x_{2}-x_{1}).
\end{equation}
In particular $u$ and $v$ tend to $\pm\infty$ at $\pm\infty$, so $\pm\infty$ are
natural boundaries and $X$ does not explode~\cite{PM1}.

Next we show that $u(X_{t})$ is $P_{x}$-integrable with expectation $u(x)$ for every
$x\in\mathbb{R}$ and $0\le t\le T$.  Fix such $x$ and $t$, and let $\sigma_{n}$ be the
exit time of $X$ from $(x-n,x+n)$.  Since $D_{u}u\equiv1$
and hence $D_{v}D_{u}u\equiv0$, the martingale property used in the proof of
Proposition~\ref{pro:holder}, applied to a bounded function in
$\mathcal{D}(D_{v}D_{u})$ that coincides with $u$ near $[x-n,x+n]$, gives
$E_{x}u(X_{t\wedge\sigma_{n}})=u(x)$.  By \eqref{equ:bilip}, the lower bounds on $du$
and $dv$ used in the proof of Proposition~\ref{pro:holder} hold on all of
$\mathbb{R}$ with $b:=\min\{M_{0}^{-1},2m_{0}\}$, so \eqref{equ:secondmoment} with
$R=n$ gives $E_{x}(X_{t\wedge\sigma_{n}}-x)^{2}\le2t/b^{2}$ for every $n$.  Since
$|u(y)-u(x)|\le|y-x|/m_{0}$ by \eqref{equ:bilip}, the family
$\{u(X_{t\wedge\sigma_{n}})\}_{n}$ is bounded in $L^{2}$ and hence uniformly integrable.
As $X$ does not explode, $\sigma_{n}\to\infty$ and $u(X_{t\wedge\sigma_{n}})\to u(X_{t})$
almost surely, and therefore
\begin{equation}\label{equ:scalemart}
  E_{x}|u(X_{t})|<\infty,\qquad E_{x}u(X_{t})=u(x),
  \qquad x\in\mathbb{R},\ 0\le t\le T .
\end{equation}

We now construct an ordered coupling of the processes started at two points.  Fix
$x_{1}<x_{2}$, and let $X$ and $Y$ be independent copies of
the process, started at $x_{1}$ and at $x_{2}$.  The pair $(X,Y)$ is a Feller process on
$\mathbb{R}^{2}$, being a product of independent Feller processes, and is therefore
strong Markov with respect to the usual augmentation $(\mathcal{F}_{t})$ of its
natural filtration.  The coupling time
$\tau:=\inf\{t\ge0:\ X_{t}=Y_{t}\}$ is a stopping time, as the hitting time of the
closed diagonal by a continuous adapted process.  Define $Z_{t}:=Y_{t}$ for $t<\tau$
and $Z_{t}:=X_{t}$ for $t\ge\tau$.  We show that $Z$ has the law of the process started at $x_{2}$ and that
$X_{t}\le Z_{t}$ for all $t\ge0$.

To see that $Z$ has the law of the process started at $x_{2}$, let $\Gamma$ be the
measurable map that concatenates a path stopped at time $s$ with a path started at
the stopping point, given by $\Gamma(\zeta,s,z)_{r}:=\zeta_{r}$ for $r<s$ and
$\Gamma(\zeta,s,z)_{r}:=z_{r-s}$ for $r\ge s$.  Then
$Z=\Gamma(Y_{\cdot\wedge\tau},\tau,X_{\tau+\cdot})$ and
$Y=\Gamma(Y_{\cdot\wedge\tau},\tau,Y_{\tau+\cdot})$ on $\{\tau<\infty\}$, while
$Z=Y$ on $\{\tau=\infty\}$.  The pair $(Y_{\cdot\wedge\tau},\tau)$ is
$\mathcal{F}_{\tau}$-measurable, and by the strong Markov property of $(X,Y)$ at
$\tau$, conditionally on $\mathcal{F}_{\tau}$ the paths $X_{\tau+\cdot}$ and
$Y_{\tau+\cdot}$ have the laws $P_{X_{\tau}}$ and $P_{Y_{\tau}}$ on
$\{\tau<\infty\}$.  These coincide because $X_{\tau}=Y_{\tau}$.  Hence, for every
bounded measurable functional $G$ on path space,
$E\,G(Z)=E\,G(Y)$.

To see that $X_{t}\le Z_{t}$ for all $t\ge0$, note that for $t\ge\tau$ this is an
equality.
On $[0,\tau)$ the continuous function $Y-X$ does not vanish and is positive at
$t=0$, so it is positive there, that is, $X_{t}<Y_{t}=Z_{t}$.

By this ordering and \eqref{equ:bilip},
$Z_{t}-X_{t}\le M_{0}\bigl(u(Z_{t})-u(X_{t})\bigr)$.  Taking expectations, and using
the law of $Z$ and \eqref{equ:scalemart} for each of the two processes separately,
we obtain
\begin{equation}\label{equ:couplingbound}
  E|Z_{t}-X_{t}|=E\bigl[Z_{t}-X_{t}\bigr]
  \ \le\ M_{0}\bigl(E\,u(Z_{t})-E\,u(X_{t})\bigr)
  \ =\ M_{0}\bigl(u(x_{2})-u(x_{1})\bigr)
  \ \le\ \kappa\,(x_{2}-x_{1})
\end{equation}
for $0\le t\le T$.

Finally, we transfer this bound to the Picard iterates.  Let $w_{n}$ be the iterates \eqref{equ:EQ10} from the
proof of Theorem~\ref{thm:existence}.  By the induction principle, we show that
\begin{equation}\label{equ:lipinduction}
  |w_{n}(t,x_{2})-w_{n}(t,x_{1})|\ \le\ A_{0}e^{B_{0}t}\,|x_{2}-x_{1}|,
  \qquad 0\le t\le T,\ x_{1},x_{2}\in\mathbb{R}.
\end{equation}
For $n=0$ this holds because $w_{0}=g$ and $K_{g}\le A_{0}$, since $\kappa\ge1$ and
$e^{\Lambda T}>1$.  Suppose that \eqref{equ:lipinduction} holds for $n$, let $x_{1}<x_{2}$,
and let $(X,Z)$ be the coupling above.  Put
$a_{1}:=\int_{0}^{t}c\bigl(X_{s},0,w_{n}(t-s,X_{s})\bigr)ds$ and let $a_{2}$ be the same
expression with $Z$ in place of $X$.  Both are at most $\Lambda T$, so
\[
  \begin{aligned}
  |w_{n+1}(t,x_{2})-w_{n+1}(t,x_{1})|
  &\ \le\ E\bigl[|g(Z_{t})-g(X_{t})|\,e^{a_{2}}\bigr]
     +E\bigl[g(X_{t})\,|e^{a_{2}}-e^{a_{1}}|\bigr]\\
  &\ \le\ e^{\Lambda T}K_{g}\,E|Z_{t}-X_{t}|+\|g\|e^{\Lambda T}E|a_{2}-a_{1}| .
  \end{aligned}
\]
By the Lipschitz properties of $c$ in $x$ and in $w$ and by \eqref{equ:lipinduction},
\[
  |a_{2}-a_{1}|\ \le\ \int_{0}^{t}\bigl(K_{0}+K_{c}A_{0}e^{B_{0}(t-s)}\bigr)
     |Z_{s}-X_{s}|\,ds .
\]
Taking expectations and using \eqref{equ:couplingbound} at each time gives
\[
  \begin{aligned}
  |w_{n+1}(t,x_{2})-w_{n+1}(t,x_{1})|
  &\ \le\ \kappa\,e^{\Lambda T}\Bigl(K_{g}+\|g\|K_{0}t
     +\|g\|K_{c}\int_{0}^{t}A_{0}e^{B_{0}r}\,dr\Bigr)(x_{2}-x_{1})\\
  &\ \le\ \Bigl(A_{0}+B_{0}\int_{0}^{t}A_{0}e^{B_{0}r}\,dr\Bigr)(x_{2}-x_{1})
  \ =\ A_{0}e^{B_{0}t}(x_{2}-x_{1}),
  \end{aligned}
\]
which is \eqref{equ:lipinduction} for $n+1$.  Since $w_{n}\to w$ uniformly by the
proof of Theorem~\ref{thm:existence}, \eqref{equ:lipschyp} follows with
$K_{w}=A_{0}e^{B_{0}T}$.  For $w^{\delta}$ the argument is the same, with $X^{\delta}$
in place of $X$, since the bounds \eqref{equ:bilip} hold for $u^{\delta}$ and $v^{\delta}$
directly and the Picard iterates for \eqref{equ:EQ6} converge to $w^{\delta}$ by
the proof of Theorem~\ref{thm:existence}.
\end{proof}

\begin{corollary}\label{cor:holder}
Under the assumptions of Lemma~\ref{lem:lipschitz}, the generalized solution $w$ is
H\"older continuous in $t$ with exponent $1/2$, uniformly in $x$, that is, there is a
constant $C$ such that
\[
  |w(t+h,x)-w(t,x)|\ \le\ C\sqrt{h}
  \qquad\text{for all } x\in\mathbb{R},\ 0<t\le t+h\le T ,
\]
where $C=C(m_{0},M_{0},K_{g},K_{0},K_{c},\|g\|,\Lambda,\|c\|,T)$.  The same bound, with the same constant, holds for $w^{\delta}$ for every $\delta>0$.
\end{corollary}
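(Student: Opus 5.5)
The plan is to combine Lemma~\ref{lem:lipschitz} with the argument of Proposition~\ref{pro:holder}. The one extra point is that under the global bounds $m_{0}\le V^{\delta}\le M_{0}$ every constant in that argument can be chosen independently of the base point $x_{0}$. First, Lemma~\ref{lem:lipschitz} gives \eqref{equ:lipschyp} with $K_{w}=A_{0}e^{B_{0}T}$. This constant depends only on $m_{0},M_{0},K_{g},K_{0},K_{c},\|g\|,\Lambda,T$, and it holds for $w$ and for every $w^{\delta}$. Second, \eqref{equ:bilip} shows that the measure inequalities $du\ge M_{0}^{-1}dx$ and $dv\ge 2m_{0}\,dx$ hold on all of $\mathbb{R}$, not only on compact sets. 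Hence the constant $b(K_{R})$ of Proposition~\ref{pro:holder} may be replaced by $b:=\min\{M_{0}^{-1},2m_{0}\}$ for every $x_{0}\in\mathbb{R}$ and every $R>0$. For $X^{\delta}$ the same bounds hold directly for $u^{\delta},v^{\delta}$, so the same $b$ works uniformly in $\delta$.

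Next, I would rerun the proof of Proposition~\ref{pro:holder} with this $b$ and with $R=1$. The bound \eqref{equ:EQ16} uses only $\|c\|$, $\|w\|\le\|g\|e^{\Lambda T}$ and $T$. Dynkin's formula \eqref{equ:optional} for $\Psi$ stopped at the exit from $[x_{0}-1,x_{0}+1]$ and the lower bound \eqref{equ:EQ20} then give \eqref{equ:secondmoment} and \eqref{equ:exit} with constants free of $x_{0}$. The splitting on $\{\tau>h\}$ then yields \eqref{equ:EQ21} with
\[
C=e^{\|c\|T}\Bigl(\frac{\sqrt{2}\,K_{w}}{b}+\frac{4\|g\|e^{\Lambda T}\sqrt{T}}{b^{2}}\Bigr)+\|g\|e^{\Lambda T}e^{\|c\|T}\|c\|\sqrt{T}.
\]
Here I have used the bound $\|w\|\le\|g\|e^{\Lambda T}$ in place of $\|w\|$, so that $C$ depends only on the listed quantities. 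Since nothing in $C$ depends on $x_{0}$, the estimate holds for all $x\in\mathbb{R}$.

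For $w^{\delta}$, I would repeat the argument with $X^{\delta}$ and the Feynman--Kac identity \eqref{equ:EQ6}. This requires the Markov-property step \eqref{equ:EQ15} for $w^{\delta}$, which holds just as for $w$. It also requires Dynkin's formula for $\Psi^{\delta}$ built from $u^{\delta},v^{\delta}$; this is classical here because $X^{\delta}$ is a smooth diffusion and $\Psi^{\delta}$ is $C^{2}$ with $\mathcal{L}^{\delta}\Psi^{\delta}=1$. The constants are the same, because $b$ and $K_{w}$ are the same.

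The main obstacle is only bookkeeping: one has to check that no step of Proposition~\ref{pro:holder} secretly uses compactness. The candidate places are the construction of a bounded domain function agreeing with $\Psi$ near $[x_{0}-1,x_{0}+1]$, and the lower bound on $dv$ via right continuity. The first is local and needs no uniform constant, since it only yields the identity \eqref{equ:optional}. The second is global by \eqref{equ:bilip}. So the uniformity in $x$ follows directly.
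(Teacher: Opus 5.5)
Your proposal is correct and follows essentially the same route as the paper. Lemma~\ref{lem:lipschitz} supplies $K_{w}=A_{0}e^{B_{0}T}$; the global bounds \eqref{equ:bilip}, together with their $\delta$-versions, allow $b=\min\{M_{0}^{-1},2m_{0}\}$ independently of the base point; and the proof of Proposition~\ref{pro:holder} is then rerun verbatim for $w$ and for $w^{\delta}$. Your explicit choice $R=1$ and your replacement of $\|w\|$ by $\|g\|e^{\Lambda T}$ simply spell out what the paper leaves implicit when it lists the quantities on which $C$ depends.
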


\begin{proof}
By Lemma~\ref{lem:lipschitz}, \eqref{equ:lipschyp} holds with
$K_{w}=A_{0}e^{B_{0}T}$.  The hypothesis of Proposition~\ref{pro:holder} holds with
$m=m_{0}$ and $M=M_{0}$ on every compact set, so in its proof we may take
$b=\min\{M_{0}^{-1},2m_{0}\}$, independently of $K$.  The constant $C$ then does not
depend on $K$, and \eqref{equ:EQ21} holds uniformly in $x$.  For $w^{\delta}$, the proof of Proposition~\ref{pro:holder}
applies verbatim, since the bounds $du^{\delta}\ge M_{0}^{-1}dx$ and
$dv^{\delta}\ge2m_{0}\,dx$ hold for every $\delta>0$, and Lemma~\ref{lem:lipschitz}
gives the Lipschitz bound with the same constant.
\end{proof}

For Proposition~\ref{pro:convergence}, we need a slightly stronger form of the
weak convergences in Section~\ref{sec:setup}, in which the initial point may vary.
Assumption \textup{(W1)} below says that the narrow tube process converges to the
reduced one dimensional diffusion even when its initial point moves with
$\varepsilon$, as long as the longitudinal coordinate of the initial point converges.  Assumption
\textup{(W2)} says the same for the convergence of $X^{\delta}$ to $X$ as
$\delta\downarrow0$.

\medskip
\noindent\textbf{(W) (Weak convergence along convergent initial points.)}

\smallskip
\noindent\textup{(W1)} For every fixed $\delta>0$, every sequence
$\varepsilon_{k}\downarrow0$, and every sequence
$(x_{k},y_{k})\in G^{\varepsilon_{k},\delta}$ with $x_{k}\to x\in\mathbb{R}$, the law
of $(X^{\varepsilon_{k},\delta},Y^{\varepsilon_{k},\delta})$ under
$P^{\varepsilon_{k},\delta}_{x_{k},y_{k}}$ converges weakly in
$C([0,T];\mathbb{R}\times\mathbb{R}^{n})$ to the law of $(X^{\delta},0)$ under
$P^{\delta}_{x}$.

\smallskip
\noindent\textup{(W2)} For every sequence $\delta_{k}\downarrow0$ and every sequence
$x_{k}\to x\in\mathbb{R}$, the law of $X^{\delta_{k}}$ under $P^{\delta_{k}}_{x_{k}}$
converges weakly in $C([0,T];\mathbb{R})$ to the law of $X$ under $P_{x}$.

\medskip
Assumption \textup{(W)} concerns the convergence of the processes rather than
the nonlinear problem.  For a fixed initial point, \textup{(W1)} and
\textup{(W2)} are the weak convergences recalled in Section~\ref{sec:setup}, taken
from \cite{F96,FS} and \cite{FW2} (see also \cite{FW12} for bounded planar domains).
Their proofs also give convergence along convergent initial points.  For
\textup{(W1)}, the averaging estimates of \cite{FS} are coupling estimates whose
constants depend only on bounds for the geometry of the tube, and in the periodic
setting of Proposition~\ref{pro:convergence} these bounds are uniform over a period
cell, so the estimates hold uniformly in the initial point.  Together with the
continuous dependence of $X^{\delta}$ on its initial point, this gives \textup{(W1)}.  For \textup{(W2)}, a process $X^{\delta}$ started at a point close to $x$
reaches $x$ within a short time with high probability, by the formulas for exit
probabilities and expected exit times in terms of $u^{\delta}$ and $v^{\delta}$~\cite{EBD3,PM1},
since $u^{\delta}\to u$ locally uniformly with $u$ continuous.  After that time the
strong Markov property reduces the claim to the convergence from the fixed point
$x$.

\begin{proposition}\label{pro:convergence}
Under assumption \textup{(W)}, suppose that $G^{\delta}$ is
$1$-periodic in $x$ for every $\delta>0$ and that $g$ and $c$ are $1$-periodic in $x$.
Let $w^{\delta}$ be the solution of \eqref{equ:EQ2}, and let $w$ be the generalized
solution of \eqref{equ:EQ7}.  Then, for each $\delta>0$, the solutions
$w^{\varepsilon,\delta}$ of \eqref{equ:EQ1} satisfy
$w^{\varepsilon,\delta}(t,x,y)\to w^{\delta}(t,x)$ as $\varepsilon\downarrow0$, uniformly
in $t\in[0,T]$ and $(x,y)\in G^{\varepsilon,\delta}$.  Moreover, $w^{\delta}\to w$ as
$\delta\downarrow0$, uniformly on $[0,T]\times\mathbb{R}$.
\end{proposition}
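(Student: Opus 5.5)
Both limits are handled by one argument, which I carry out for the second limit and then indicate for the first. The plan combines three ingredients: the Gr\"onwall-type contraction of Lemma~\ref{lem:basic}, weak convergence along convergent initial points from (W), and compactness of a period cell.

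For the second limit, fix $\delta_k\downarrow0$ and define
\[
\rho_k(t):=\sup_{x}|w^{\delta_k}(t,x)-w(t,x)|.
\]
By periodicity of $G^\delta$, $g$ and $c$, uniqueness in Theorem~\ref{thm:existence} shows that $w^{\delta_k}$ and $w$ are $1$-periodic in $x$. Indeed, the shifted processes are the processes started at shifted points, so the supremum may be taken over $x\in[0,1]$.

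Subtract the two Feynman--Kac identities \eqref{equ:EQ6} and \eqref{equ:EQ9} and insert the intermediate term
\[
E^{\delta_k}_x\Bigl[g(X^{\delta_k}_t)\exp\Bigl\{\int_0^t c\bigl(X^{\delta_k}_s,0,w(t-s,X^{\delta_k}_s)\bigr)ds\Bigr\}\Bigr].
\]
Exactly as in Lemma~\ref{lem:basic}, the first difference is at most $K\int_0^t\rho_k(r)\,dr$. The second difference is
\[
\eta_k(t,x):=E^{\delta_k}_x\Theta_t-E_x\Theta_t,
\]
with the functional $\Theta_t$ built from $w$ as in the proof of Lemma~\ref{lem:basic}. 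Gr\"onwall then gives
\[
\sup_t\rho_k(t)\le e^{KT}\sup_{t\le T,\,x\in[0,1]}|\eta_k(t,x)|.
\]
So it suffices to show $\eta_k\to0$ uniformly on $[0,T]\times[0,1]$.

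To prove this, argue by contradiction. Suppose there are points $(t_k,x_k)$ with $|\eta_k(t_k,x_k)|\ge\epsilon$. Pass to a subsequence with $t_k\to t$ and $x_k\to x$. By (W2), the laws of $X^{\delta_k}$ under $P^{\delta_k}_{x_k}$ converge weakly to $P_x$. Consider the functionals $\Theta^{(k)}:=\Theta_{t_k}$ on $C([0,T])$. These are uniformly bounded, and the aim is to show $\Theta^{(k)}(\xi^{(k)})\to\Theta_t(\xi)$ whenever $\xi^{(k)}\to\xi$ uniformly. Granting this, the extended continuous mapping theorem (Skorokhod representation) gives $E^{\delta_k}_{x_k}\Theta_{t_k}\to E_x\Theta_t$. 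Feller continuity together with the same joint continuity gives $E_{x_k}\Theta_{t_k}\to E_x\Theta_t$, and combining the two yields the contradiction.

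The first limit, for fixed $\delta$ as $\varepsilon\downarrow0$, is identical. Work with
\[
\sup_{x,y}|w^{\varepsilon,\delta}(t,x,y)-w^\delta(t,x)|,
\]
compare to the intermediate term with $w^\delta(t-s,X^{\varepsilon,\delta}_s)$, and use the Lipschitz property of $c$ in $y$-independent form together with continuity of $c$ in $y$ and $|Y|\le C\varepsilon$. Use (W1) in place of (W2), periodicity in $x$, and the fact that $w^\delta$ is a continuous bounded classical solution. Compactness of $\{(x,y):x\in[0,1]\}$ is available because the rescaled $y$ only enters through $x_k$.

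The main obstacle is the joint continuity of $\Theta_t$ in $(t,\xi)$ when $w$ is only known to be continuous in $x$ for each fixed $t$, and merely Borel in $t$. The integrand $c(\xi_s,0,w(t-s,\xi_s))$ involves $w$ at varying times, and pointwise convergence in $s$ is fine for fixed $t$ but fails when $t_k$ varies. I would first try to handle $t_k\to t$ by using equicontinuity in $x$ of the family $\{w(r,\cdot)\}_r$, together with continuity in $t$. Under the hypotheses of Lemma~\ref{lem:lipschitz} and Corollary~\ref{cor:holder} this is immediate. In general I would derive continuity in $t$ from \eqref{equ:EQ15}, using the exit estimate \eqref{equ:exit} and periodicity.

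If this fails, I would avoid varying $t$ altogether. I would prove, for each fixed $t$, that $\eta_k(t,\cdot)\to0$ uniformly on $[0,1]$; this needs only compactness in $x$. I would then note that Gr\"onwall applied to $\rho_k$ only requires $\int_0^T\sup_x|\eta_k(r,x)|\,dr\to0$, which follows from pointwise convergence in $t$ and bounded convergence, once the Gr\"onwall step is rewritten with $\eta_k$ inside the integral.
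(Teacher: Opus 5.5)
Your architecture matches the paper's proof. You split the difference into a contraction term bounded by $K\int_0^t\rho_k(r)\,dr$ as in Lemma~\ref{lem:basic} and a ``same functional, two laws'' term. You reduce to $x\in[0,1]$ by periodicity, argue by contradiction along $(t_k,x_k)\to(t,x)$ using (W), the extended continuous mapping theorem and the Feller property, and finish with Gr\"onwall. For the first limit this closes, because $w^{\delta}$ is jointly continuous on $[0,T]\times\mathbb{R}$.

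The gap is in the second limit, at exactly the step you grant: joint continuity of $(t,\xi)\mapsto\Theta_t(\xi)$ when $w$ is only Borel in $t$. The missing idea is the substitution $r=t-s$, which is what the paper does:
\[
  \Theta_t(\xi)=g(\xi_t)\exp\Bigl\{\int_0^T\mathbf{1}_{\{r<t\}}\,
  c\bigl(\xi_{t-r},0,w(r,\xi_{t-r})\bigr)\,dr\Bigr\}.
\]
The time argument of $w$ is now frozen, and only the spatial argument moves. Suppose $t_k\to t$ and $\xi^{(k)}\to\xi$ uniformly. For each fixed $r<t$, $|\xi^{(k)}_{t_k-r}-\xi_{t-r}|\le\|\xi^{(k)}-\xi\|+|\xi_{t_k-r}-\xi_{t-r}|\to0$ by uniform continuity of $\xi$. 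Continuity of the single function $w(r,\cdot)$ then gives convergence of the integrand. The indicators converge for $r\neq t$, and dominated convergence with $|c|\le\|c\|$ finishes. No regularity of $w$ in $t$ is needed.

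Your fallbacks do not replace this.

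The first invokes Lemma~\ref{lem:lipschitz} and Corollary~\ref{cor:holder}, whose hypotheses are not assumed here: two-sided bounds on $V^{\delta}$ uniform in $\delta$, and $g$ and $c$ Lipschitz in $x$. In general, getting continuity in $t$ from \eqref{equ:EQ15} needs $\sup_t E_x|w(t,X_h)-w(t,x)|\to0$, i.e.\ equicontinuity of $\{w(t,\cdot)\}_t$. That is no easier than the joint continuity you are trying to prove. Also, \eqref{equ:exit} itself uses a uniform lower bound on $V^{\delta}$.

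The second fallback yields a weaker conclusion. In your decomposition $\eta_k(t,x)$ enters additively at the current time, not under the integral. So Gr\"onwall gives $\rho_k(t)\le a_k(t)+Ke^{KT}\int_0^T a_k(r)\,dr$ with $a_k(t):=\sup_x|\eta_k(t,x)|$. Fixed-$t$ convergence and bounded convergence dispose of the integral, but $a_k(t)$ remains. You therefore get $\rho_k(t)\to0$ for each $t$, not uniformly on $[0,T]$ as the proposition asserts.
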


\begin{proof}
We first take the limit $\varepsilon\downarrow0$ for fixed $\delta>0$.
Fix $\delta>0$ and let $w^{\delta}$ be the unique bounded solution of
\eqref{equ:EQ2}, which exists and is continuous and bounded on
$[0,T]\times\mathbb{R}$ by the remarks following \eqref{equ:EQ6}.  Set
\[
  \eta_{\varepsilon}(t):=\sup_{(x,y)\in G^{\varepsilon,\delta}}
    \bigl|w^{\varepsilon,\delta}(t,x,y)-w^{\delta}(t,x)\bigr|,
  \qquad 0\le t\le T,
\]
which is bounded, since both $w^{\varepsilon,\delta}$ and $w^{\delta}$ are bounded.
For a bounded function $\varphi$ on
$[0,T]\times G^{\varepsilon,\delta}$ define
\[
  \mathcal{F}^{\varepsilon}_{t}[\varphi](x,y)
  :=E^{\varepsilon,\delta}_{x,y}\left[g(X^{\varepsilon,\delta}_{t})
    \exp\left\{\int_{0}^{t}c\bigl(X^{\varepsilon,\delta}_{s},
      Y^{\varepsilon,\delta}_{s},
      \varphi(t-s,X^{\varepsilon,\delta}_{s},Y^{\varepsilon,\delta}_{s})\bigr)
      ds\right\}\right],
\]
so that \eqref{equ:EQ5} reads
$w^{\varepsilon,\delta}(t,\cdot)=\mathcal{F}^{\varepsilon}_{t}[w^{\varepsilon,\delta}]$.
Since $\mathcal{F}^{\varepsilon}_{t}$ acts on functions of $(t,x,y)$, write
$\bar w^{\delta}(t,x,y):=w^{\delta}(t,x)$ for the extension of $w^{\delta}$ to the
tube, constant in the transverse variable.  Then
\begin{equation}\label{equ:split}
  w^{\varepsilon,\delta}(t,x,y)-w^{\delta}(t,x)
  =\underbrace{\Bigl(\mathcal{F}^{\varepsilon}_{t}[w^{\varepsilon,\delta}](x,y)
      -\mathcal{F}^{\varepsilon}_{t}[\bar w^{\delta}](x,y)\Bigr)}_{=:\,\mathrm{I}}
  +\underbrace{\Bigl(\mathcal{F}^{\varepsilon}_{t}[\bar w^{\delta}](x,y)
      -w^{\delta}(t,x)\Bigr)}_{=:\,\mathrm{II}} .
\end{equation}
Term $\mathrm{I}$ involves a single process with two different arguments of the
nonlinearity, whereas $\mathrm{II}$ involves a single functional under two different
laws.

For the first term $\mathrm{I}$, the computation in the proof of
Lemma~\ref{lem:basic}, applied to $\mathcal{F}^{\varepsilon}_{t}$ in place of
$\mathcal{F}_{t}$, gives
\[
  |\mathrm{I}|
  \le\|g\|e^{\Lambda T}\,E^{\varepsilon,\delta}_{x,y}
     \int_{0}^{t}\bigl|c(\cdot,\cdot,w^{\varepsilon,\delta})
       -c(\cdot,\cdot,\bar w^{\delta})\bigr|\,ds
  \le\|g\|e^{\Lambda T}K_{c}\int_{0}^{t}\eta_{\varepsilon}(t-s)\,ds
  =K\int_{0}^{t}\eta_{\varepsilon}(r)\,dr,
\]
with $K$ as in Lemma~\ref{lem:basic}, finite and independent of $\varepsilon$ and
$t$.

For the second term $\mathrm{II}$, for a path
$(\xi,\zeta)\in C([0,T];\mathbb{R}\times\mathbb{R}^{n})$, define
\[
  \Theta_{t}(\xi,\zeta)
  :=g(\xi_{t})\exp\left\{\int_{0}^{t}
    c\bigl(\xi_{s},\zeta_{s},w^{\delta}(t-s,\xi_{s})\bigr)ds\right\}.
\]
Since $w^{\delta}$ solves \eqref{equ:EQ6}, we have
$w^{\delta}(t,x)=E^{\delta}_{x}\bigl[\Theta_{t}(X^{\delta},0)\bigr]$, and the same
functional evaluated along the process in the tube gives
$\mathcal{F}^{\varepsilon}_{t}[\bar w^{\delta}](x,y)
=E^{\varepsilon,\delta}_{x,y}\Theta_{t}
(X^{\varepsilon,\delta},Y^{\varepsilon,\delta})$.  Hence
\[
  \mathrm{II}=E^{\varepsilon,\delta}_{x,y}\Theta_{t}
  (X^{\varepsilon,\delta},Y^{\varepsilon,\delta})
  -E^{\delta}_{x}\Theta_{t}(X^{\delta},0).
\]
The functional $\Theta_{t}$ is bounded by $\|g\|e^{\Lambda T}$.  We show that
\[
  R_{\varepsilon}:=\sup_{0\le t\le T}\ \sup_{(x,y)\in G^{\varepsilon,\delta}}
    |\mathrm{II}(t,x,y)|\ \longrightarrow\ 0
  \qquad\text{as }\varepsilon\downarrow0 .
\]
First, we reduce the supremum to $x\in[0,1]$.  Since $g$, $c$, and $V^{\delta}$ are
$1$-periodic, $w^{\delta}$ is $1$-periodic in $x$ by the uniqueness of the solution of
\eqref{equ:EQ2}, and hence $\Theta_{t}(\xi+1,\zeta)=\Theta_{t}(\xi,\zeta)$.  Since the
tube is $1$-periodic, the law of $(X^{\varepsilon,\delta},Y^{\varepsilon,\delta})$
under $P^{\varepsilon,\delta}_{x+1,y}$ is the law of
$(X^{\varepsilon,\delta}+1,Y^{\varepsilon,\delta})$ under
$P^{\varepsilon,\delta}_{x,y}$.  Similarly, since $V^{\delta}$ is $1$-periodic, the law
of $X^{\delta}$ under $P^{\delta}_{x+1}$ is the law of $X^{\delta}+1$ under
$P^{\delta}_{x}$.  Therefore
$\mathrm{II}(t,x+1,y)=\mathrm{II}(t,x,y)$, and it suffices to take the supremum over
$x\in[0,1]$.

Next, $\Theta_{t}(\xi,\zeta)$ is jointly continuous in $t\in[0,T]$ and
$(\xi,\zeta)\in C([0,T];\mathbb{R}\times\mathbb{R}^{n})$, where the path space carries
the topology of uniform convergence.  If we write the integral in $\Theta_{t}$ as
\[
  \int_{0}^{t}c\bigl(\xi_{t-r},\zeta_{t-r},w^{\delta}(r,\xi_{t-r})\bigr)\,dr,
\]
then
dominated convergence gives $\Theta_{t_{m}}(\xi^{(m)},\zeta^{(m)})\to\Theta_{t}(\xi,\zeta)$
whenever $t_{m}\to t$ and $(\xi^{(m)},\zeta^{(m)})\to(\xi,\zeta)$ uniformly on
$[0,T]$.  This uses only the continuity of $w^{\delta}(r,\cdot)$ for each $r$.

Finally, suppose that $R_{\varepsilon}$ does not tend to $0$.  Then there are $c_{*}>0$,
$\varepsilon_{k}\downarrow0$, and $(t_{k},x_{k},y_{k})$ with $t_{k}\in[0,T]$,
$x_{k}\in[0,1]$, and $(x_{k},y_{k})\in G^{\varepsilon_{k},\delta}$ such that
$|\mathrm{II}(t_{k},x_{k},y_{k})|\ge c_{*}$, where $\mathrm{II}$ is computed with
$\varepsilon=\varepsilon_{k}$.  After passing to a subsequence, $t_{k}\to t$ and
$x_{k}\to x$.  By \textup{(W1)} and the extended continuous mapping theorem
\cite[Theorem~5.5]{PB},
\[
  E^{\varepsilon_{k},\delta}_{x_{k},y_{k}}\Theta_{t_{k}}
    \bigl(X^{\varepsilon_{k},\delta},Y^{\varepsilon_{k},\delta}\bigr)
  \ \longrightarrow\ E^{\delta}_{x}\Theta_{t}\bigl(X^{\delta},0\bigr),
\]
and by the Feller property of $X^{\delta}$ and the same theorem,
\[
  E^{\delta}_{x_{k}}\Theta_{t_{k}}\bigl(X^{\delta},0\bigr)
  \ \longrightarrow\ E^{\delta}_{x}\Theta_{t}\bigl(X^{\delta},0\bigr).
\]
Hence $\mathrm{II}(t_{k},x_{k},y_{k})\to0$, which contradicts
$|\mathrm{II}(t_{k},x_{k},y_{k})|\ge c_{*}$.

Taking the supremum over $(x,y)$ in \eqref{equ:split} and combining the two bounds,
\[
  \eta_{\varepsilon}(t)\ \le\ K\int_{0}^{t}\eta_{\varepsilon}(r)\,dr
    +R_{\varepsilon},\qquad 0\le t\le T .
\]
Since $\eta_{\varepsilon}$ is bounded and measurable, Gr\"onwall's inequality yields
\[
  \sup_{0\le t\le T}\eta_{\varepsilon}(t)\ \le\ R_{\varepsilon}\,e^{KT}
  \ \longrightarrow\ 0
  \qquad\text{as }\varepsilon\downarrow0 ,
\]
so $w^{\varepsilon,\delta}\to w^{\delta}$ uniformly, for each fixed $\delta>0$.

\medskip
It remains to take the limit $\delta\downarrow0$.
The generator of \eqref{equ:EQ2} is
$\mathcal{L}^{\delta}f=\tfrac12 f_{xx}+\tfrac12\bigl(\ln V^{\delta}\bigr)'f_{x}
=D_{v^{\delta}}D_{u^{\delta}}f$, with $u^{\delta}$ and $v^{\delta}$ as in
Section~\ref{sec:setup}.  Since $(u^{\delta},v^{\delta})$ converges to $(u,v)$ in the
sense stated there, the theorem of Freidlin and Wentzell \cite{FW2} on weak
convergence of one dimensional Markov processes shows that $X^{\delta}_{t}$ converges
weakly in $C([0,T];\mathbb{R})$ to the process $X_{t}$ generated by $D_{v}D_{u}$.

Let $w$ be the generalized solution of \eqref{equ:EQ7} given by
Theorem~\ref{thm:existence} and set
\[
  \eta_{\delta}(t):=\sup_{x\in\mathbb{R}}\bigl|w^{\delta}(t,x)-w(t,x)\bigr|,
  \qquad 0\le t\le T,
\]
which is bounded.  Let $\mathcal{F}^{\delta}_{t}$ be defined as in \eqref{equ:Fop}
with $X^{\delta}$ in place of $X$, so that \eqref{equ:EQ6} reads
$w^{\delta}(t,\cdot)=\mathcal{F}^{\delta}_{t}[w^{\delta}]$, and write
\[
  w^{\delta}(t,x)-w(t,x)
  =\underbrace{\Bigl(\mathcal{F}^{\delta}_{t}[w^{\delta}](x)
      -\mathcal{F}^{\delta}_{t}[w](x)\Bigr)}_{=:\,\mathrm{I}}
  +\underbrace{\Bigl(\mathcal{F}^{\delta}_{t}[w](x)-w(t,x)\Bigr)}_{=:\,\mathrm{II}} ,
\]
which is \eqref{equ:split} with $X^{\delta}$ in place of
$(X^{\varepsilon,\delta},Y^{\varepsilon,\delta})$ and $w$ in place of $w^{\delta}$.
No extension is needed here, since both functions are defined on
$[0,T]\times\mathbb{R}$.

Term $\mathrm{I}$ is estimated as before and contributes
$K\int_{0}^{t}\eta_{\delta}(r)\,dr$ with the same constant $K$.  For term
$\mathrm{II}$ the relevant functional is
\[
  \Theta_{t}(\xi):=g(\xi_{t})\exp\left\{\int_{0}^{t}
    c\bigl(\xi_{s},0,w(t-s,\xi_{s})\bigr)ds\right\},
  \qquad \xi\in C([0,T];\mathbb{R}),
\]
which is bounded and, since $w(r,\cdot)$ is continuous for each $r$ by
Definition~\ref{def:gensol}, continuous.  Here $u$ and $v$ have $1$-periodic increments, as one sees by letting
$\delta\downarrow0$ in the same property of $u^{\delta}$ and $v^{\delta}$ and using the
right continuity of $v$.  Hence the law of $X$ under $P_{x+1}$ is the law of $X+1$ under
$P_{x}$, and $w$ is $1$-periodic in $x$ by the uniqueness part of
Theorem~\ref{thm:existence}.  The argument used for
$R_{\varepsilon}$, with \textup{(W2)} in place of \textup{(W1)} and the Feller
property of $X$ in place of that of $X^{\delta}$, therefore gives
\[
  R_{\delta}:=\sup_{0\le t\le T}\ \sup_{x\in\mathbb{R}}|\mathrm{II}(t,x)|
  \ \longrightarrow\ 0
  \qquad\text{as }\delta\downarrow0 ,
\]
and Gr\"onwall gives
$\sup_{0\le t\le T}\eta_{\delta}(t)\le R_{\delta}e^{KT}\to0$, so
$w^{\delta}\to w$ uniformly on $[0,T]\times\mathbb{R}$.
\end{proof}

\section{Discussion}\label{sec:remarks}

The results above concern the Cauchy problem on a finite time interval $[0,T]$.  The
question that originally motivated them, the asymptotic speed of wave front
propagation, is a statement about $t\to\infty$.

Suppose that the medium is periodic, that is, that $\partial G^{\delta}$ has period
one in $x$.  For KPP nonlinearities the front position is governed by the linearized
problem, with potential $c_{0}$ from Section~\ref{sec:setup}, through a family of
twisted operators
\[
  L^{z,\varepsilon,\delta}
  =\tfrac12\Delta-\langle z,\nabla\rangle+c_{0}(x,y)+\tfrac12\langle z,z\rangle ,
  \qquad z=(z_{1},\mathbf{0})\in\mathbb{R}\times\mathbb{R}^{n},
\]
acting on functions periodic in $x$ with period one and satisfying the Neumann
condition of \eqref{equ:EQ1}.  Writing $\lambda^{\varepsilon,\delta}_{1}(z)$ for the
principal eigenvalue of $L^{z,\varepsilon,\delta}$, and
$\lambda^{\delta}_{1}(z_{1})$ for the principal eigenvalue of the corresponding
one dimensional operator associated with \eqref{equ:EQ2}, the asymptotic front speed
in each case is obtained from the eigenvalue by a Legendre-type variational formula~\cite{BHN05,FG79}.
Convergence of the speeds would follow from
\begin{equation}\label{equ:openproblem}
  \lambda^{\varepsilon,\delta}_{1}(z)\ \longrightarrow\ \lambda^{\delta}_{1}(z_{1})
  \quad(\varepsilon\downarrow0),
  \qquad
  \lambda^{\delta}_{1}(z_{1})\ \longrightarrow\ \lambda_{1}(z_{1})
  \quad(\delta\downarrow0),
\end{equation}
where $\lambda_{1}$ is the principal eigenvalue of the twisted operator built from
$D_{v}D_{u}$.

We have not yet been able to establish \eqref{equ:openproblem}, which we leave for future work. 
The difficulty is that, by the variational formula of Donsker and
Varadhan \cite{DV}, the principal eigenvalue is a $t\to\infty$ logarithmic growth
rate of the Feynman--Kac semigroup, whereas Proposition~\ref{pro:convergence} and the
weak convergence it rests on control the semigroup only for each fixed $t$, with no
uniformity as $t\to\infty$.  An analytic
approach would use that the period cell is bounded, so the twisted operator has
compact resolvent and discrete spectrum, the principal eigenvalue is simple and
isolated by the Krein--Rutman theorem, and convergence of isolated simple eigenvalues
would follow from resolvent convergence by the perturbation theory of Kato
\cite{TK}.  A variational approach would establish upper and lower semicontinuity in
$\varepsilon$ of the Donsker--Varadhan formula separately.  Of the two limits in \eqref{equ:openproblem}, the one as
$\delta\downarrow0$ appears the harder, since the limiting operator $D_{v}D_{u}$ is a
Feller generalized operator whose domain is not a classical function space.

The conditions under which the front jumps, described by Freidlin and Spiliopoulos
\cite{FS}, suggest that \eqref{equ:openproblem} may fail in general, and that
identifying what separates convergence from discontinuity of the speed is the
substantive question.

\end{document}